\numberwithin{equation}{section}
\newcommand{\D}{\displaystyle}
\newcommand{\pd}[2]{\D\frac{\partial{#1}}{\partial{#2}}}  
\newcommand{\pdtwo}[3]{\D\frac{\partial^2{#1}}{\partial{#2}\partial{#3}}}
\newcommand{\qed}{\hfill$\blacksquare$}
\newcommand{\N}{\mathbb{N}}
\newcommand{\R}{\mathbb{R}}
\newcommand{\T}{\mathbb{T}}
\def\erf{\mathop{\rm erf}\nolimits}
\def\Tr{\mathop{\rm Tr}\nolimits}
\newtheorem{thm}{Theorem}[section]
\newtheorem{lem}[thm]{Lemma}
\newtheorem{prop}[thm]{Proposition}
\newtheorem{corr}[thm]{Corollary}
\newtheorem{define}{Definition}[section]
\newtheorem{defn}{Definition}[section]
\newtheorem{rem}{Remark}[section]
\newtheorem{remark}{Remark}[section]
\newenvironment{proof}{{\bf Proof.}  }{\hfill$\blacksquare$}
\renewcommand{\a}{\alpha}
\renewcommand{\b}{\beta}
\newcommand{\g}{\gamma}
\renewcommand{\d}{\delta}
\renewcommand{\k}{\kappa}
\renewcommand{\th}{\theta}
\newcommand{\1}{\mathbf{1}}
\newcommand{\0}{\mathbf{0}}
\newcommand{\av}[1]{\left|{#1}\right|}
\newcommand{\ip}[2]{\left\langle{{#1}},{{#2}}\right\rangle}
\newcommand{\norm}[1]{\left\|{#1}\right\|}
\renewcommand{\l}{\left}
\newcommand{\q}{\quad}
\renewcommand{\r}{\right}
\newcommand{\be}{\begin{enumerate}}
\newcommand{\bi}{\begin{itemize}}
\newcommand{\ee}{\end{enumerate}}
\newcommand{\ei}{\end{itemize}}
\newcommand{\x}{{\mathbf {{x}}}}
\renewcommand{\P}{\mathbb{P}}
\renewcommand{\phi}{\varphi}
\newcommand{\bomega}{{\boldsymbol{\omega}}}
\newcommand{\btheta}{{\boldsymbol{\theta}}}
\newcommand{\bchi}{{\boldsymbol{\chi}}}
\newcommand{\f}{{\mathbf{f}}}
\newcommand{\ST}{\mathcal{S}_\btheta}
\newcommand{\SO}{\mathcal{S}_\bomega}
\newcommand{\psync}{\mathcal{P}_{{{\mathsf{sync}}}}}
\newcommand{\M}{\mathbf{M}}
\newcommand{\DD}{\mathbf{D}}
\newcommand{\gstar}{\gamma^\star}
\newcommand{\gmin}{{\gamma}_{{{\mathsf{min}}}}}
\newcommand{\gmax}{{\gamma}_{{{\mathsf{max}}}}}
\newcommand{\omin}{{\bomega}_{{{\mathsf{min}}}}}
\newcommand{\omax}{{\bomega}_{{{\mathsf{max}}}}}
\renewcommand{\v}{{\mathbf{v}}} \newcommand{\w}{{\mathbf{w}}} 
\begin{document}

\title{Fully synchronous solutions and the synchronization phase
  transition for the finite-$N$ Kuramoto model} \date{\today}

\author{Jared C. Bronski \\ University of Illinois \and Lee DeVille \\
  University of Illinois \and Moon Jip Park \\ University of Illinois}

\maketitle

\begin{abstract}
  We present a detailed analysis of the stability of synchronized
  solutions to the Kuramoto system of oscillators. We derive an
  analytical expression counting the dimension of the unstable manifold  
associated to a given stationary solution. From this we are able
  to derive a number of consequences, including: analytic expressions
  for the first and last frequency vectors to synchronize, upper and
  lower bounds on the probability that a randomly chosen frequency
  vector will synchronize, and very sharp results on the large $N$
  limit of this model. One of the surprises in this calculation is
  that for frequencies that are Gaussian distributed the correct scaling 
  for full synchrony is not the one commonly
  studied in the literature---rather, there is a logarithmic
  correction to the scaling which is related to the extremal value
  statistics of the random frequency vector. 
\end{abstract}

\section{Introduction}

\subsection{History of Kuramoto model}

The study of synchronization of coupled nonlinear oscillators has a
history that spans several centuries, starting with Huygens'
observation of synchronizing pendulum clocks~\cite{Huygens,
  Bennett.etal.02}.  There has been a great body of work throughout
this history studying such synchronization phenomena; for reviews
see~\cite{Sync.book, PRK.book, Winfree.book}.  There are a wide
variety of such models derived from a diverse collection of
mathematical and scientific questions, including
pulse-coupled~\cite{Knight.72, Peskin.75} and conservative~\cite{FPU,
  Ford.92} models.  These models arise in a number of contexts, including numerous biological 
applications~\cite{Kopell.Ermentrout.86,Medvedev.Kopell.01}, and exhibit a diversity of behaviors.

A fundamental model of {\em synchronization}, however, is the case
where we consider independent oscillators connected through
dissipative coupling, and the fundamental question is~\cite{Sync.book,S}:
how do independent oscillators with different frequencies adjust
themselves to produce a collective mode?

The system of coupled ordinary differential equations
\[
\frac{d\theta_i}{dt} = \gamma \sum_j \sin(\theta_j-\theta_i) \] was
first proposed by Kuramoto~\cite{Kuramoto.book, Kuramoto.91} as a
model for the synchronization of oscillators; this model, and
variants, are widely known as the Kuramoto model. Since this time, the
Kuramoto model has been a fundamental model for many types of systems
exhibiting synchronization~\cite{BS, E, TOR, Hansel.Sompolinsky.92}
and related phenomena such as flocking~\cite{HK, HLRS}.  For reviews,
see~\cite{S, Acebron.etal.05}.  Most work in this area has analyzed
the Kuramoto model in the continuum limit, where the number is
oscillators $N$ is formally allowed to go to infinity and the
summation is replaced by an appropriate integral. Such formal
calculations provide a tremendous amount of physical insight into the
problem; however, it has proven to be difficult to make these
approaches rigorous.

An alternative approach, attributed by Strogatz~\cite{S} to a series of 
lectures by Kopell, is to analyze the
finite-$N$ problem carefully and then take the $N\to\infty$
limit. There have been a few papers which have established rigorous
results on the existence and stability of synchronized
solutions~\cite{VO1,VO2, Mirollo.Strogatz.05,Mirollo.Strogatz.90} but to date this the finite-$N$
problem has been only partially understood. In this paper we carry out
a substantial portion of this program. We focus
on the case of full synchrony, where all of the oscillators rotate
with the same angular frequency. We derive new characterizations of
the stable regions, which permit a detailed understanding of the
region in frequency space where full synchronization occurs.  In
particular we are able to identify a large subset of the stable
region, in the form of the Voronoi cell of a well-understood high
dimensional lattice. Using this we find upper and lower bounds on the
probability that the system undergoes full synchronization.

\subsection{ Problem Formulation}

The standard formation of the Kuramoto problem is as follows: Consider
a weighted (directed) graph $G=(V,E)$ and denote $\gamma_{ij}>0$ as
the weight of edge $i\to j$.  For any $\omega\in\R^N$, define the
dynamical system on $\theta\in\T^N$ by
\begin{equation}\label{eq:K-gen}
  \frac{d}{dt}\theta_i = \omega_i + \sum_{j} \gamma_{ji} \sin(\theta_j-\theta_i).
\end{equation}
One of the more common choices of interaction graph is the symmetric
all-to-all graph where we assume that all the $\gamma_{ij}$ are equal.
We thus consider the system
\begin{equation}\label{eq:K}
 \frac{d}{dt} \theta_i = \omega_i + \gamma\sum_j \sin(\theta_j-\theta_i),
\end{equation}
or, defining the function $\f\colon \T^n\to \R^n$ coordinatewise as
\begin{equation}\label{eq:defofF}
  f_i(\btheta) = \sum_j \sin(\theta_j-\theta_i), 
\end{equation}
we can write~\eqref{eq:K} as 
\begin{equation}\label{eq:Kvec}
  \frac{d}{dt} \btheta = \bomega +\gamma \f(\btheta).
\end{equation}

\begin{remark}
  Note that we have not rescaled the coupling coefficient
  in~\eqref{eq:K}; the typical scaling chosen for Kuramoto is
  $\gamma/N$.  We will refer to this scaling below as the ``classical
  scaling''.  One of the results of this paper is that the classical
  scaling is only the correct scaling for certain problems, and not
  for others. For the problem where the frequencies $\omega_i$ are
  chosen from a Gaussian distribution, for instance, we shall see that
  the correct scaling differs from the classical one by a logarithmic
  term.
\end{remark}

\begin{remark}
 It is worth noting that this flow is a gradient flow, if one considers the angles as lying in the covering space 
${\mathbb R}^N$ rather than ${\mathbb T}^N$. The flow can be written as 
\[
 \frac{d\btheta }{dt} = \nabla L
\]
where the energy $L$ is given by 
\[
 L = \langle \bomega, \btheta\rangle + \gamma \sum_{i,j} \cos(\theta_i-\theta_j)-N. 
\]
Here the constant $-N$ is chosen for convenience and obviously doesn't influence the dynamics. 
It is common to work with the ``order parameter'' $R$ 
\[
 R^2(\btheta) = \left(\sum_i \cos(\theta_i) \right)^2 +  \left(\sum_i \sin(\theta_i) \right)^2 = N + \sum_{i,j} \cos(\theta_i-\theta_j),
\]
which gives the energy function as 
\[
 L = \langle \bomega, \btheta\rangle + R^2(\btheta). 
\]
Thus the Kuramoto flow tries to maximize an energy given by a sum of two terms. The first term acts to 
align the flow with the frequency vector $\bomega,$ while the second acts to increase the order parameter. 
All of the structure that arises in Kuramoto model is due to a competition between these two effects. 
 
\end{remark}

The fundamental question we consider is the question of
whether~\eqref{eq:K} admits a fully synchronous solution, as we now
define:
\begin{define}
  For a given value of the frequency vector $\bomega$ we say that the
  Kuramoto model exhibits {\em full synchrony} if~\eqref{eq:K} with
  $\gamma=1$ admits a stationary solution $\btheta$ which is
  asymptotically stable, i.e. we have a solution to the equation
  \begin{equation}\label{eq:fp}
    \f(\btheta) = -\bomega
  \end{equation}
  such that, if we define the Jacobian matrix
  \begin{equation}\label{eq:defofJ}
    J = \nabla_\btheta f,
  \end{equation}
  then $J$ is negative semi-definite with a one-dimensional kernel.
  If such a solution exists, we call it a {\em fully synchronous
    solution}.  
\end{define}

\begin{rem}
  It is not hard to see that, for fully synchronous solutions, we will
  have $$\max_{i,j}\sup_{t\in[0,\infty)}\norm{\theta_i(t)-\theta_j(t)}
  < \infty.$$ However, note that due to the continuous symmetry
  of~\eqref{eq:K}, these fully synchronous solutions can be moving in
  the same co-rotating frame.  The Kuramoto model also permits
  partially synchronous solutions where a large mass of the
  oscillators are fixed relative to each other, and some subset
  precesses relative to them.  We do not consider such solutions in
  this work, q.v.~the discussion in Section~\ref{sec:scaling} below.
\end{rem}

The results of the paper are organized as follows.  In
Section~\ref{sec:stable} we give an almost complete characterization
of the $\bomega$ which give rise to fully synchronous solutions; in
particular, we show that this set is convex, identify many important
points on the boundary of this set, and explicitly determine the
convex hull of these points.  We would like to characterize ``how
easy'' it is for a Kuramoto problem to fully synchronize, but there
are many senses in which one can pose this problem; we consider two
different cases below.

For example, we could pose the following question: given
$\bomega\in\R^N$, define $\gstar(\bomega)$ as the minimal $\g$ for
which~\eqref{eq:Kvec} has a fully synchronous solution.  Of course the
presence of synchrony is invariant under a rescaling in time, so one
may as well assume that $\av{\bomega} = 1$. The motivates the
definition
\begin{define}
  Define the lower and upper critical couplings $\gmin(N)$ and
  $\gmax(N)$ in the following way:
\begin{equation}\label{eq:defofgmingmax}
  \gmin(N) := \inf_{\substack{\bomega\in\R^N\\\av{\bomega} = 1}} \gstar(\bomega), \q \gmax(N) := \sup_{\substack{\bomega\in\R^N\\\av{\bomega} = 1}} \gstar(\bomega)
\end{equation}
\end{define}
Roughly $\gmin(N)$ characterizes the minimal coupling constant we
should choose to get full synchrony, and the values of $\bomega$ for
which the infimum is attained (by symmetry there are many) can be
thought of as the ``most easily synchronizable frequency(s)'';
analogously $\gmax(N)$ characterizes the most difficult
frequencies. to synchronize, and gives the largest necessary coupling
constant. For coupling constants above this value all frequency
vectors synchronize.  The quantities $\gmin$ and $\gmax$ are closely
related to the quantity $K_c$ studied by Verwoerd and Mason in their
work\cite{VO1,VO2}: $\gmin(N)$ is essentially the minimum of $K_c$
over the unit circle, and $\gmax$ the maximum.

We study $\gmin(N),\gmax(N)$ in Section~\ref{sec:upperlower}, and give
bounds which show the different asymptotic scalings of these
quantities.  We will show that in the limit $N\to\infty$,
$\gmin(N)\sim N^{-3/2}$ and $\gmax(N)\sim N^{-1}$.  In particular,
synchrony ``turns on'' much earlier than the classical scaling,
although it ``fills up'' in a way consistent with the classical
scaling.

A different method of choosing $\bomega$, which is very common in the
literature, is to assume that the entries of $\bomega$ are chosen
independently from some probability distribution, i.e. that the
$\omega_n$ are iid random variables.  In Section~\ref{sec:Gaussian} we
consider this question; stated precisely, we proceed as follows.  Fix
$\g>0$ and $N$, and choose $\omega_n$ independently from a Gaussian
distribution with mean zero and unit variance, i.e. the $\omega_n$ are
independent $N(0,1)$.  Define $\psync(\g,N)$ as the probability
of~\eqref{eq:K} having a fully synchronous solution. Define
$\varphi(N) := \sqrt{2\ln(N)}/(N+1)$, and what we show below in
Theorem~\ref{thm:limit} is
\begin{equation}\label{eq:limit}
  \lim_{N\to\infty} \psync(\delta \varphi(N), N) = \begin{cases} 0,& \delta < 1,\\ 1,& \delta > 2.\end{cases}
\end{equation}
In particular, this implies that $\psync(\gamma/N,N) \to 0$ for any
$\gamma$, so that in the classical scaling the probability of full
synchrony is zero.  In fact, we will show in
Proposition~\ref{prop:exp} that the probability of full synchrony
decays to zero exponentially fast as $N\to\infty$.  We show that this
anomalous scaling is closely related to the extreme value statistics
for a Gaussian distribution, and is to be expected for any
distribution of frequencies which is not of compact support.

Finally, in Section~\ref{sec:scaling} we finish with some comments
about the different choices of scaling and why they arise and present
a few open questions.

\section{Characterization of the Stable Set}\label{sec:stable}

In this section we write down a relatively complete description of the
set of $\bomega$ for which~\eqref{eq:K} admits a stable fully synchronous
solution. We do this by proving an index theorem which counts the dimension 
of the unstable manifold to any stationary solution. The stable synchronous 
solutions are then those for which the unstable manifold is zero dimensional.

\subsection{Notation}

We consider the finite $N$ Kuramoto model with uniform sine coupling
on the complete graph, which we repeat
\begin{equation}\tag{\ref{eq:Kvec}}
  \frac{d}{dt} \btheta = \bomega +\gamma \f(\btheta)
\end{equation}
Note that 
\begin{equation*}
  \sum_{i=1}^N f_i(\btheta) = 0
\end{equation*}
for all $\btheta$, due to a telescoping sum.  Therefore we have 
\begin{equation}\label{eq:defofOmega}
  \frac{d}{dt} \sum \theta_i = \sum \omega_i =: \Omega.
\end{equation}
This means that, if $\Omega\neq0$, the center of mass of the system
precesses around the circle with a constant velocity.  Using the
change of variables $\tilde\theta_i = \theta_i - N^{-1}\Omega t$ puts
us into a corotating frame and allows us to assume without loss of
generality that $\sum_i \omega_i=0$.


The function $\f\colon \T^n\to\R^n$ is a natural map from the
configuration space ${\mathbb T}^{n}$ to the frequency space ${\mathbb
  R}^n$.  Assuming some convexity conditions which will be established
later, the frequency vector $\bomega$ is the convex dual variable to
the angle vector $\btheta$.  It will also be convenient to think of
this as a map $\f\colon \T^{n-1} \to \R^{n-1} \cong
\R^n/\{1,1,\dots,1\}$ --- this can be done, for example, by fixing a
single $\theta_n$.

The Jacobian of this mapping controls the stability of the
synchronized state, and it is crucial to have a good understanding of
set of parameter values for which the Jacobian is negative
semi-definite. This motivates the following definition:

\begin{define}
  We define ${\mathcal S}_\theta$ to be the set of configurations for
  which the Jacobian $\frac{\partial \f}{\partial {\bf \theta}}$ is
  negative semi-definite with a one dimensional kernel. The boundary
  of this set is obviously the set of configurations for which the
  Jacobian $\frac{\partial \f}{\partial {\bf \theta}}$ is negative
  semi-definite with a kernel of dimension two or more.
\end{define}

Throughout the paper calligraphic capital letters will denote sets of
particular interest, with a subscript of $\theta$ denoting sets in the
configuration space $\T^{n-1}$ and a subscript $\omega$ denoting the
corresponding sets in the frequency space. For instance if $\ST$
denotes the set of asymptotically stable stationary configurations
then $\SO$, the set of frequencies admitting an asymptotically stable
stationary configuration, is simply the image of $\ST$ under the map
$\f$:
\[
\f: \ST \mapsto \SO
\]    

It is clear that $\SO$ is the important object for studying
synchronization: all questions about the probability of (full)
synchrony are questions about the size of $\SO$ in some measure. One
of the key ingredients in this is a good characterization of $\SO$.

\subsection{Index Theorem}

In this section we prove an index theorem which counts the dimension
of the unstable manifold to the synchronized state. The basic idea of
this section is that the Jacobian takes a relatively simple form, as
it can be written as a rank two perturbation of a diagonal matrix.  An
application of a rank-two perturbation formula gives a straightforward
count of the number of positive eigenvalues.

\begin{define}
  Given a matrix $A$, we define the three quantities $n_+(A), n_-(A),
  n_0(A)$ as the number of eigenvalues of $A$ with positive, negative,
  and zero real parts, respectively.  We will refer to these as the
  indices of $A$.  (Given a vector field $\f$ with fixed point
  $\btheta^*$, we will abuse notation and refer to the indices of
  $\btheta^*$ when we mean the indices of the Jacobian of the vector
  field at $\btheta$.)
\end{define}

A straightforward calculation gives the following expression for the
Jacobian matrix
\begin{equation}\label{eq:Jacobiancoordinates}
  J_{ij}(\btheta) := \pd{f_i}{\theta_j}(\btheta) 
  = \begin{cases} \cos(\theta_i - \theta_j),&  i \neq j \\ -\sum_{k\neq i}  \cos(\theta_k-\theta_i),& i=j.\end{cases}
\end{equation}

Using the cosine angle addition formula, we have
\begin{equation}\label{eq:rank2}
  J = -D + \v \otimes \v +  \w \otimes \w
\end{equation}
where the vectors $\v$ and $\w$ are defined by
\begin{equation}\label{eq:defofvw}
  \v =  \left(\begin{array}{c} \sin(\theta_1) \\ \sin(\theta_2)
      \\ \vdots \\ \sin( \theta_n) \\ \end{array}\right),  \qquad
  \w = \left(\begin{array}{c} \cos(\theta_1) \\ \cos(\theta_2)
      \\ \vdots \\ \cos(\theta_n) \\ \end{array}\right),
\end{equation}
$D$ is the diagonal matrix 
\begin{equation}\label{eq:defofD}
  D_{ij} = \delta_{ij}\sum_k \cos(\theta_k-\theta_i),
\end{equation}
and $\otimes$ denotes the usual outer product 
$\left({\bf a} \otimes {\bf b}\right)_{ij} = a_i b_j.$ 

The Jacobian is a positive semi-definite rank two perturbation of a
diagonal matrix, a fact which figured in the analysis
of~\cite{Mirollo.Strogatz.05}. The spectrum of a low rank perturbation of an
operator can be computed explicitly in terms of the spectrum of the
unperturbed operator together with some information about the inverse
of the (unperturbed) operator, a fact usually known as the
Aronszajn-Krein formula~\cite{Simon}. Here the unpertubed operator is
diagonal and can thus be trivially inverted, and one can get a
reasonably complete description of the spectrum. 

Thus we proceed as follows.  Since the full Jacobian is a rank two
perturbation of the diagonal, the indices of $J$ and $D$ can differ by
at most two.  We will first compute the index of $D$, then compute the
difference of indices explicitly.

\begin{define}
  Given a configuration $\btheta$, we define the {\em complex order
    parameter} 
\begin{equation}\label{eq:op}
  R e^{i \psi} = \frac 1N \sum_{n=1}^N e^{i\theta_n}.
\end{equation}
\end{define}

\begin{prop}\label{prop:nD}
  Using the $O(1)$ invariance, rotate the configuration so that the
  order parameter is on the positive real axis.  Then we have
\[
n_+(-D) = \#\left\{\theta_i | \cos(\theta_i)<0\right\} 
\]
\end{prop}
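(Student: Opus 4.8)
The plan is to exploit the fact that $D$ is diagonal, so its spectrum is read off directly from its entries. After the rotation normalizing $\psi=0$, the diagonal entries are $D_{ii} = \sum_k \cos(\theta_k - \theta_i)$, and the claim $n_+(-D) = \#\{i : \cos(\theta_i) < 0\}$ is equivalent to $n_-(D) = \#\{i : \cos(\theta_i) < 0\}$, i.e. to showing that $D_{ii} < 0$ precisely when $\cos(\theta_i) < 0$. So the whole proposition reduces to a sign computation for a single diagonal entry.

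First I would expand $D_{ii}$ using the cosine addition formula: $D_{ii} = \sum_k \cos(\theta_k - \theta_i) = \cos(\theta_i)\sum_k\cos(\theta_k) + \sin(\theta_i)\sum_k\sin(\theta_k)$. Now invoke the normalization: writing the order parameter as $Re^{i\psi} = \frac1N\sum_n e^{i\theta_n}$ and using the $O(1)$ symmetry to put $\psi = 0$ forces $\sum_k \sin(\theta_k) = 0$ and $\sum_k\cos(\theta_k) = NR \geq 0$. Hence $D_{ii} = NR\cos(\theta_i)$. Since $R \geq 0$, the sign of $D_{ii}$ equals the sign of $\cos(\theta_i)$ whenever $R > 0$, giving $n_-(D) = \#\{i : \cos(\theta_i) < 0\}$ and therefore $n_+(-D) = \#\{i : \cos(\theta_i) < 0\}$, as desired.

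The only genuine obstacle is the degenerate case $R = 0$, where every $D_{ii} = 0$ and the formula $n_+(-D) = \#\{i:\cos\theta_i<0\}$ need not hold on the nose (the right side could be positive while $n_+(-D)=0$). I would handle this either by noting that $R=0$ corresponds to a measure-zero / non-generic set of configurations that can be excluded from the stability analysis, or by observing that when $R = 0$ the fixed-point equation $\f(\btheta) = -\bomega$ together with the structure of $J$ already precludes a stable synchronous state, so such configurations are irrelevant to the characterization of $\ST$. In the main argument one should simply state the rotation hypothesis as "rotate so the order parameter is on the \emph{positive} real axis," implicitly taking $R>0$; the $R=0$ case can be dispatched in a remark. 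Everything else is the one-line trigonometric identity above, so no step is computationally heavy.
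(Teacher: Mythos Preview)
Your proposal is correct and follows essentially the same approach as the paper: both use the cosine addition formula to write $D_{ii} = \cos(\theta_i)\sum_k\cos(\theta_k) + \sin(\theta_i)\sum_k\sin(\theta_k)$ and then invoke the normalization $\psi=0$ to identify the sign of $D_{ii}$ with the sign of $\cos(\theta_i)$. Your treatment is in fact slightly more explicit (you write out $D_{ii}=NR\cos\theta_i$) and more careful about the degenerate case $R=0$, which the paper's proof passes over in silence.
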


\begin{proof}
Since $D$ is diagonal we just have to count the number of negative 
entries on the diagonal. We have 
\begin{equation}\label{eq:dii}
  D_{ii} = \sum_k \cos(\theta_k-\theta_i) = \cos(\theta_i) \sum_k \cos(\theta_k) + \sin(\theta_k)\sum \sin(\theta_i)
\end{equation}
which we recognize as the $(1,0)$ component of the order parameter
rotated through angle $\theta_i$. The number of times this is negative
is the number of angles in the range $(\pi/2, 3\pi/2)$.
\end{proof}

\begin{rem}
  Proposition~\ref{prop:nD} is equivalent to the calculations done in
  Section~4 of~\cite{Mirollo.Strogatz.05}.
\end{rem}

Since the perturbation is rank two, and one of the eigenvalues of the
Jacobian is zero, if $D$ is negative-definite then there is only one
eigenvalue whose sign is to be determined.


\begin{thm}\label{thm:index} 
  Suppose $D$ is invertible. Define the following quantity:
\begin{equation}\label{eq:defoftau}
  \tau = \sum_i \frac{1}{\sum_j \cos(\theta_j-\theta_i)} =  \ip\v{D^{-1}\v} +\ip\w{D^{-1}\w} =\ip{\1}{D^{-1} \1}.
\end{equation}
Then we have 
\begin{equation}\label{eq:index}
n_+(J) = n_+(-D) + \begin{cases} 0,& \tau < 2, \\ 1, & \tau > 2. \end{cases}
\end{equation}
\end{thm}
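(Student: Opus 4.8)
The plan is to use the Aronszajn–Krein (rank-one) formula twice, or equivalently a single rank-two perturbation determinant identity, to track how the indices change as we add $\v\otimes\v$ and $\w\otimes\w$ to $-D$. Write $J = -D + \v\otimes\v + \w\otimes\w$. Since $-D$ is assumed invertible (hence nonsingular), and the perturbation has rank at most two, $n_+(J)$ can exceed $n_+(-D)$ by at most two and can drop by at most two. The key structural fact is that $J$ always has the kernel vector $\1$ (since $\sum_i f_i \equiv 0$ forces $J\1 = 0$ after the corotating reduction, or directly $J\1=0$ from \eqref{eq:Jacobiancoordinates}), and moreover $\ip{\1}{D^{-1}\1} = \ip\v{D^{-1}\v} + \ip\w{D^{-1}\w} = \tau$ by the definition \eqref{eq:defoftau} together with $\v\odot\v + \w\odot\w = \1$ (Pythagoras entrywise) — wait, more precisely one uses $D_{ii} = \ip{\w}{\w}$-type identities; in any case \eqref{eq:defoftau} is given, so I may assume it. So one eigenvalue of $J$ is pinned at $0$, which means only one further eigenvalue's sign is genuinely in play once we know that adding the two positive semidefinite rank-one terms to the negative definite-or-not matrix $-D$ cannot create a second sign change beyond accounting for the forced zero.

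First I would reduce to the case $D$ negative definite, i.e. $n_+(-D) = 0$. Here the picture is cleanest: $-D$ is negative definite, we add two positive rank-one bumps, and we must decide whether exactly one eigenvalue crosses zero. Using the Weinstein–Aronszajn determinant formula, for $\lambda \notin \mathrm{spec}(-D)$,
\[
\det(J - \lambda I) = \det(-D-\lambda I)\,\det\!\l(I_2 + M(\lambda)\r),
\]
where $M(\lambda)$ is the $2\times 2$ matrix with entries built from $\ip{\v}{(-D-\lambda I)^{-1}\v}$, $\ip{\v}{(-D-\lambda I)^{-1}\w}$, $\ip{\w}{(-D-\lambda I)^{-1}\w}$. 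Evaluating the scalar condition $\det(I_2 + M(\lambda)) = 0$ at $\lambda = 0$: because $\1$ is in the kernel of $J$ and $\ip{\1}{D^{-1}\1} = \tau$, the $2\times 2$ determinant degenerates in a way that makes the sign of the relevant eigenvalue switch precisely as $\tau$ passes through $2$. Concretely, I expect the condition for $J$ to have its single "free" eigenvalue be negative versus positive to reduce to $\tau < 2$ versus $\tau > 2$, with the factor of $2$ coming from the fact that we have two rank-one terms each contributing a trace-like term and the combination $\ip\v{D^{-1}\v}+\ip\w{D^{-1}\w} = \tau$ must be compared against the "normalization" $2 = \dim(\text{range of the perturbation})$ in the appropriate secular equation. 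I would make this precise by rotating the order parameter to the real axis (as in Proposition~\ref{prop:nD}) so that $\ip{\1}{\w} = NR$ and $\ip{\1}{\v} = 0$, which decouples the $2\times 2$ system and leaves a single scalar secular equation whose sign at $0$ is governed by $\tau - 2$.

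Then for the general case, $n_+(-D) = m > 0$, I would argue that adding positive semidefinite perturbations can only increase eigenvalues (by the min–max/Courant–Fischer characterization), so $n_+(J) \ge n_+(-D) = m$, while the rank-two bound gives $n_+(J) \le m + 2$; combined with the forced zero eigenvalue of $J$ and an interlacing argument, only one value among $\{m, m+1\}$ is attainable. The remaining point is to show the threshold is still exactly $\tau = 2$ — this follows by the same secular-equation computation, now restricted to the spectral subspace where the action is nontrivial, since $D^{-1}$ still satisfies $\ip{\1}{D^{-1}\1}=\tau$ regardless of the signs of $D$'s entries, and the scalar function controlling the sign change is monotone in $\lambda$ between consecutive poles.

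The main obstacle I anticipate is handling the $2\times 2$ secular determinant cleanly: a priori one must analyze $\det(I_2 + M(\lambda))$, and showing that its relevant root crosses $0$ exactly at $\tau = 2$ (and not at some other combination of $\ip\v{D^{-1}\v}$, $\ip\w{D^{-1}\w}$, $\ip\v{D^{-1}\w}$) requires the specific algebraic miracle that $\ip\v{D^{-1}\v} + \ip\w{D^{-1}\w} = \ip{\1}{D^{-1}\1}$ together with the kernel relation $J\1 = 0$. The cleanest route is: diagonalize the problem by using the $O(1)$-rotation to kill $\ip{\v}{\1}$, note $\1 = \v\odot\v\cdot(\text{...})$ — no, rather observe $\v$ and $\w$ need not be orthogonal, but the vector $\1$, which is their "Pythagorean sum" in the sense $\1_i = \sin^2\theta_i + \cos^2\theta_i$, is what couples to $D^{-1}$ via $\tau$; so I would change basis in the two-dimensional range from $\{\v,\w\}$ to a basis adapted to $\1$, reducing the $2\times2$ secular equation at $\lambda=0$ to the single scalar statement $1 - \tfrac12\tau \gtrless 0$, i.e. $\tau \lessgtr 2$. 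Everything else (the rank-two bound, positivity of the perturbation, the pinned zero eigenvalue) is standard linear algebra.
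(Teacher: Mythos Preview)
Your approach is essentially the paper's: both exploit the rank-two structure $J = -D + \v\otimes\v + \w\otimes\w$ via a $2\times 2$ Aronszajn--Krein secular matrix, and both use that $J\1=0$ forces this $2\times 2$ matrix to be singular. The one substantive difference is the choice of homotopy parameter. You propose to track the Weinstein--Aronszajn determinant $\det(I_2 + M(\lambda))$ in the spectral parameter $\lambda$, which is correct but requires controlling the secular function across the poles of $(-D-\lambda I)^{-1}$. The paper instead interpolates in the perturbation strength, setting $J_\eta = -D + \eta(\v\otimes\v + \w\otimes\w)$ and counting zero-eigenvalue crossings as $\eta$ runs from $0$ to $1$. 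This makes the associated $2\times 2$ matrix $M_\eta$ \emph{linear} in $\eta$, with slope the Gram matrix of $\v,\w$ in the $D^{-1}$ inner product, so monotonicity of its eigenvalues is immediate and no pole-tracking is needed.

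The second simplification you are circling around but not quite landing on: at $\eta=1$ one already knows $M_1$ is singular (since $J\1=0$), so one eigenvalue of $M_1$ is zero and the other equals
\[
\mathrm{tr}(M_1) \;=\; \bigl(\ip{\v}{D^{-1}\v}-1\bigr) + \bigl(\ip{\w}{D^{-1}\w}-1\bigr) \;=\; \tau - 2.
\]
No change of basis ``adapted to $\1$'' and no $O(1)$-rotation are needed; the trace reads off the free eigenvalue directly. Your proposal is correct in outline, but the $\eta$-homotopy together with this trace trick replaces your secular-function analysis and basis manipulations with a two-line computation.
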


\begin{proof}
  We proceed by a homotopy argument in the spirit of the
  Birman--Schwinger Principle (see, for instance, the discussion on page 98 of Volume 4 of the text of Reed and Simon~\cite{Reed.Simon}).  Consider
  the one parameter family of operators
\begin{equation}\label{eq:defofJeta}
  J_\eta = -D + \eta(\v\otimes\v + \w\otimes\w)
\end{equation}
Clearly $J_0 =-D$ and $J_1 = J$.  Since $\v\otimes\v+\w\otimes\w$ is
non-negative, all eigenvalues are non-decreasing functions of
$\eta$. We can detect eigenvalues crossing from the left half-plane to
the right half-plane by detecting changes in the dimension of the
kernel of $J_\eta$. $J_\eta$ has non-trivial kernel if there is a
vector $\x$ with $J_\eta\x = \0$, or
\begin{equation*}
 -D \x + \eta (\ip\v\x \v + \ip\w\x \w) =0.
\end{equation*}
Solving for $\x$ gives
\begin{equation}\label{eq:x}
  \x = \eta (\ip\v\x D^{-1}\v + \ip\w\x D^{-1} \w).
\end{equation}
Taking the inner product with $\v$ and $\w$ gives
\begin{align}\label{eq:ip}
  \ip\v\x &= \eta(\ip\v\x \ip\v{D^{-1}\v} + \ip\w\x\ip\v{D^{-1}\w}),\\
  \ip\w\x &= \eta(\ip\v\x \ip\w{D^{-1}\v} + \ip\w\x\ip\w{D^{-1}\w}).
\end{align}
This can be thought of as a linear system for the quantities
\begin{equation}\label{eq:defofa1a2}
  a_1 = \ip\v\x,\q a_2 = \ip\w\x,
\end{equation}
which can be written in matrix form as
\begin{equation}\label{eq:defofmeta}
  M_\eta \l(\begin{array}{c} a_1 \\ a_2 \\\end{array}\r) := \l(\begin{array}{cc} \eta\ip\v{D^{-1}\v}-1 & \eta\ip\v{D^{-1}\w}\\ \eta\ip\w{D^{-1}\v}& \eta\ip\w{D^{-1}\w}-1\\\end{array}\r)\l(\begin{array}{c} a_1 \\ a_2 \\\end{array}\r) = \l(\begin{array}{c} 0 \\ 0 \\\end{array}\r).
\end{equation}
There exists a nontrivial solution to~\eqref{eq:defofmeta} (i.e.,
$M_\eta$ has a nontrivial kernel) if and only if $J_\eta$ itself has a
nontrivial kernel.  Thus we want to count zeroes of $\det(M_\eta)$ for
$\eta \in (0,1)$. Initially $M_0 = -{\bf I}_{2\times 2}$ has two
negative eigenvalues. At $\eta=1$ we have that $M_1$ is singular so
that one of the eigenvalues is zero. $M$ is a linear matrix value
function of $\eta$, and the linear term
\[
\l(\begin{array}{cc} \ip\v{D^{-1}\v} & \ip\v{D^{-1}\w}\\ \ip\w{D^{-1}\v}& \ip\w{D^{-1}\w}\\\end{array}\r) 
\]
is positive definite, so the eigenvalues of $M$ are increasing
functions of $\eta$. This implies that all eigenvalue crossings are
transverse left to right and so the number of positive eigenvalues of
the full matrix is determined by the non-zero eigenvalue of $M_1.$ If
this is negative there have been no eigenvalue crossings, and if this
is positive there has been one. Since one eigenvalue is zero the other
is equal to the trace, which is

\begin{equation*}
  \Tr(M_1) = -1 + \ip\v{D^{-1}\v} - 1 +  \ip\w{D^{-1}\w} = \ip{\1}{D^{-1}\1}-2 = \tau-2.
\end{equation*}

Thus if this quantity is positive a single eigenvalue has crossed into
the right half-plane and $n_+(-D)=n_+(J)+1$. If this quantity is
negative then no eigenvalues have crossed and the count is the same,
$n_+(-D)=n_+(J)$.

\end{proof}

\begin{remark}
  Since the Jacobian matrix for the Kuramoto model is of the form of a
  graph Laplacian, the Kirchoff matrix tree theorem can be applied.
  This expresses the product of the non-zero eigenvalues in terms of a
  sum over spanning trees of a product over edges in the spanning tree
  of the edge weights. In our case the edge weights may not be
  positive, but the standard proofs of the matrix tree theorem still
  hold in this case.  Our formula above suggests that the product over
  the non-zero eigenvalues of the Jacobian should be proportional to
  the quantity
\[
 \Tr(M_1) =\tau-2. 
\] 
One can check that this is the case, and actually compute the constant
of proportionality. This gives the following unusual combinatorial
identity
\begin{equation}\label{eq:frat}
\sum_{{\mathcal T}\in {\mathcal S}} \prod_{e \in {\mathcal T}} \cos(\theta_e) = \frac{2 \prod_i \sum_j \cos(\theta_i -\theta_j) - \sum_k \prod_{i\neq k} \sum_j \cos(\theta_i-\theta_j)}{\sum_{i,j} \cos(\theta_i-\theta_j)}.
\end{equation}
Here $ {\mathcal S}$ is the set of all spanning trees on $N$ points
(of which there are $N^{N-2}$), ${\mathcal T}$ is an element of $
{\mathcal S}$, a spanning tree, $e$ is an edge in the spanning tree,
and if the edge $e$ connects vertices $l$ and $m$ then
$\theta_e=\theta_l-\theta_m.$ The left-hand side of~\eqref{eq:frat}
comes from the matrix tree theorem, and the righthand side comes from
directly evaluating the product of the non-zero eigenvalues using
arguments similar to those given above. This identity is not used in
the current paper --- we really only need the formula derived in
Theorem \ref{thm:index} --- but the existence of such a formula is
extremely interesting. It suggests that there may be much more
algebraic structure in this problem than is readily apparent.
\end{remark}

Having a concise index count for the Jacobian makes it possible to
begin to make an analytical description of this region. Our first
observation is:

\begin{lem}\label{lem:sc}
  Let $\ST$ denote the region in configuration space where the there
  exists a stable synchronized solution. This region is simply
  connected and is given by the connected component of
  $\tau^{-1}([1,2))$ that contains the origin.
\end{lem}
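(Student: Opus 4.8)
The plan is to characterize $\ST$ in two stages: first identify it as a sublevel-type set defined by $\tau$, and then prove topological properties (connectedness, simple connectedness) of the relevant component. Recall from the definition that $\ST$ is the set of configurations $\btheta$ (modulo the $O(1)$ symmetry, so really in $\T^{n-1}$) for which $J(\btheta)$ is negative semi-definite with one-dimensional kernel. By Proposition~\ref{prop:nD} and Theorem~\ref{thm:index}, $n_+(J)=0$ exactly when $n_+(-D)=0$ \emph{and} $\tau<2$. The first condition, $n_+(-D)=0$, says every diagonal entry $D_{ii}=\sum_k\cos(\theta_k-\theta_i)$ is nonnegative; after rotating so the order parameter sits on the positive real axis this says every $\cos(\theta_i)\ge 0$. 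So my first step is to show that on the region where $n_+(-D)=0$ the quantity $\tau=\ip{\1}{D^{-1}\1}$ is well-defined and, where $D$ is actually invertible, satisfies $\tau\ge 1$: indeed each term $1/D_{ii}$ with $D_{ii}=\sum_k\cos(\theta_k-\theta_i)$ — and since $D_{ii}$ is the real part of $N$ times the order parameter rotated by $\theta_i$, and the order parameter has magnitude $R\le 1$, we get $D_{ii}\le N$, so $1/D_{ii}\ge 1/N$ and $\tau\ge N\cdot(1/N)=1$, with equality only in the fully synchronized configuration $\theta_1=\dots=\theta_n$. This pins down $\ST$ as (a component of) $\{\btheta : D\text{ positive definite},\ 1\le \tau(\btheta)<2\}$; the lower endpoint is automatic, so $\ST\subseteq \tau^{-1}([1,2))$.

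The second step is to argue the reverse inclusion on the component containing the origin, and here is where one must be careful. The set $\tau^{-1}([1,2))$ a priori includes configurations where $D$ is not positive definite (e.g. $D$ could have a negative eigenvalue yet $\ip{\1}{D^{-1}\1}$ could still land in $[1,2)$, or $D^{-1}$ may fail to exist). I would handle this by examining the boundary behavior: as $\btheta$ moves so that some $D_{ii}\to 0^+$ from the region where all $D_{jj}>0$, the corresponding term $1/D_{ii}\to+\infty$, so $\tau\to+\infty$; hence $\tau<2$ forces a definite lower bound on $\min_i D_{ii}$, which keeps $D$ uniformly positive definite on $\tau^{-1}([1,2))$. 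This is the crux: I must show that $\tau$ blows up at the boundary of $\{D>0\}$, so that the connected component of $\tau^{-1}([1,2))$ through the origin never touches $\partial\{D>0\}$, and therefore on this component $D$ stays positive definite and the index count of Theorem~\ref{thm:index} applies verbatim, giving $n_+(J)=0$. The origin $\btheta=\0$ lies in this set since there $D=(N)I$, $\tau=N\cdot(1/N)=1<2$, and it is a stable fully synchronous solution.

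The third step is the topology: that the component $\mathcal{C}$ of $\tau^{-1}([1,2))$ containing the origin is simply connected. Since on $\mathcal{C}$ we have uniform positive definiteness of $D$, the function $\tau$ is smooth there, and $\mathcal{C}=\{\tau<2\}\cap(\text{component through }\0)$ inside the convex-looking chart. I would prove simple connectedness by relating it to the convexity established later in the paper (or by a direct deformation-retraction argument): the energy/order-parameter structure from the second Remark shows the synchronized fixed points are constrained gradient configurations, and one can construct an explicit homotopy contracting $\mathcal{C}$ to the origin by scaling the angle differences $\theta_i-\psi$ toward zero — along such a path $R$ increases monotonically, $D$ stays positive definite, and $\tau$ stays below $2$, so the path remains in $\mathcal{C}$. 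I expect the main obstacle to be precisely the boundary/blow-up argument in step two — ensuring that $\tau<2$ genuinely confines us to the interior of $\{D>0\}$ and rules out spurious pieces of $\tau^{-1}([1,2))$ on which the Theorem~\ref{thm:index} count is invalid — rather than the (essentially soft) topological contraction in step three.
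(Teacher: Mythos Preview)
Your core idea --- contract $\ST$ to the origin by the linear scaling $\btheta(\zeta)=\zeta\btheta$ (equivalently, shrink each $\theta_i-\psi$ toward zero after rotating so $\psi=0$) --- is exactly the retraction the paper uses. So the approach is the same; the difference is in how one verifies that the retraction stays inside $\ST$.

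The paper does this in one stroke by differentiating the Jacobian itself: the off-diagonal entries of $\partial J/\partial\zeta$ are $(\theta_j-\theta_i)\sin\!\bigl(\zeta(\theta_j-\theta_i)\bigr)$, and since $n_+(-D)=0$ forces all $\theta_i$ into an arc of length $\pi$ (so $|\theta_i-\theta_j|\le\pi$), each of these is nonnegative. Thus $\partial J/\partial\zeta$ is a graph Laplacian with nonnegative weights, hence positive semi-definite, and every eigenvalue of $J$ is monotone nondecreasing in $\zeta$. Decreasing $\zeta$ from $1$ to $0$ therefore only makes $J$ more negative, and stability is preserved along the whole path. This single monotonicity argument on the spectrum of $J$ replaces both your ``blow-up at $\partial\{D>0\}$'' step and your separate tracking of $D$ and $\tau$.

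Your plan would also go through, but you left the key monotonicity unstated: you assert that ``$\tau$ stays below $2$'' along the retraction without saying why. The reason is the same arc condition $|\theta_k-\theta_i|\le\pi$: it makes each $D_{ii}(\zeta)=\sum_k\cos\!\bigl(\zeta(\theta_k-\theta_i)\bigr)$ increase as $\zeta\downarrow 0$, so $\tau(\zeta)=\sum_i 1/D_{ii}(\zeta)$ decreases. Once you add that line your argument is complete, though the paper's version is cleaner because it bypasses $\tau$ entirely and works directly with the eigenvalues of $J$.
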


\begin{proof}
  To see that the stability region is simply connected, and that it is
  the component containing the origin, we give an explicit
  stability-preserving retraction of any stable steady state onto the
  stable steady state $\btheta = \0$. Consider, for example, the
  retraction $\btheta(\zeta) = \zeta\btheta$.  When $\zeta=1$, we have
  the original steady state $\btheta$, and when $\zeta=0$, we have the
  stable steady state $\theta_i=0$.

  The derivative of the Jacobian with respect to the homotopy
  parameter is given by 
\[
\left(\frac{\partial J}{\partial \zeta}\right)_{ij} = \left\{\begin{array}{c} (\theta_j-\theta_i) \sin(\zeta(\theta_j-\theta_i)) \qquad i \neq j \\
-\sum_k (\theta_k-\theta_i)\sin(\zeta(\theta_k-\theta_i)). \qquad i=j 
\end{array}\right.
\]
A necessary condition for stability is that
$|\theta_i-\theta_j|\leq\pi$ for all $i,j.$ This implies that
$(\theta_j-\theta_i) \sin(\zeta(\theta_j-\theta_i))$ is a positive
quantity thus that $\frac{\partial J}{\partial \zeta}$ is positive
definite. Thus as the homotopy parameter $\zeta$ is decreased from $1$
to $0$ the eigenvalues of the Jacobian decrease, and all stable
stationary states are contractible onto the state ${\bf \theta}=0$ in
a way that increases stability.  This establishes the simple
connectedness of $\ST$ and thus, by continuity, $\SO$.

 \end{proof}

 We need something stronger than Lemma~\ref{lem:sc}. In order to
 derive lower bounds on quantities related to the probability of
 stabilization we would like a much stronger result: namely that the
 stable region is a convex set.  This is the content of the next few
 results.

\begin{define}
  Define $\kappa_i(\btheta) = \sum_j \cos(\theta_j-\theta_i)$.  Define
  $\ST$ as the subset of $\T^n$ for which we have $\kappa_i\ge 0$
  and $\tau = \sum_{i=1}^n \kappa_i^{-1} \le 2$, and $\SO$ as the image 
of $\ST$ under the map ${\bf f}.$
\end{define}

\begin{rem}
  Note that by Proposition~\ref{prop:nD} and Theorem~\ref{thm:index},
  $\SO$ is precisely the set of $\bomega$ such that $\bomega =
  \f(\btheta)$ for some $\btheta$, and the Jacobian~\eqref{eq:rank2}
  is negative semi-definite; in short, those $\bomega$ for which we
  have fully synchronous solutions.
\end{rem}

The above characterization of the stable region is difficult to use in
practice, since one needs to find the image under a reasonably
complicated map of a region defined by a transcendental equation. The
following lemma shows that the boundary of this region is actually a
connected piece of an algebraic variety.

\begin{lem}\label{lem:alg}
  The stable frequency set $\SO$ can be defined algebraically as
  follows: It is the solution set of
\begin{align}
  \kappa_i^2 + \omega_i^2 &= \sum_j \kappa_j \label{eq:ko}\\
  \kappa_i &> 0 \label{eq:pos}\\
  \sum \frac{1}{\kappa_i} &< 2 \label{eq:k2}\\
  \sum \omega_i &= 0 \label{eq:m0}
\end{align}
Since all $\kappa_i>0$,~\eqref{eq:k2} implies $\kappa_i>\frac{1}{2}$
for all $i$.  Also note that summing the first equation over $i$ shows
\[
{\boldsymbol \kappa}^2 + {\boldsymbol \omega}^2 = N \langle {\bf 1}, {\boldsymbol \kappa} \rangle
\]
or, equivalently 
\[
 |{\boldsymbol\kappa} - N {\bf 1}|^2 + {\boldsymbol \omega}^2 = \frac{N^3}{4} 
\]

and thus the entire stable region lies in a sphere of radius ${\boldsymbol \omega} \leq \frac{N^{\frac32}}{2}$.
\end{lem}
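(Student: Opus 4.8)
The plan is to establish the algebraic characterization \eqref{eq:ko}--\eqref{eq:m0} directly from the definition of $\SO$ as the image of $\ST$ under $\f$, and then derive the two displayed consequences by elementary algebra. First I would recall that a point $\bomega$ lies in $\SO$ exactly when there is a configuration $\btheta$ with $\f(\btheta)=-\bomega$... wait, we should be careful about signs, since the definition of full synchrony uses $\f(\btheta)=-\bomega$ but the map in the statement is $\f\colon\ST\to\SO$; in any case, what matters is that $\omega_i = -f_i(\btheta) = -\sum_j\sin(\theta_j-\theta_i) = \sum_j\sin(\theta_i-\theta_j)$ up to an overall sign that washes out when we square. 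So I would set $\omega_i = \sum_j \sin(\theta_i-\theta_j)$ and $\kappa_i = \sum_j\cos(\theta_i-\theta_j) = \sum_j\cos(\theta_j-\theta_i)$, consistent with the earlier definition of $\kappa_i$.

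The heart of the argument is the identity \eqref{eq:ko}. Here I would use the standard trick of writing $\kappa_i + i\omega_i = \sum_j e^{i(\theta_i-\theta_j)} = e^{i\theta_i}\sum_j e^{-i\theta_j} = N e^{i\theta_i}\conj{Re^{i\psi}}$, where $Re^{i\psi}$ is the complex order parameter of \eqref{eq:op}. Taking modulus squared gives $\kappa_i^2 + \omega_i^2 = N^2 R^2$ for every $i$. On the other hand $NR^2 = \frac1N\sum_{j,k}\cos(\theta_j-\theta_k)$... more usefully, $\sum_j \kappa_j = \sum_{j,k}\cos(\theta_j-\theta_k) = N^2R^2$ directly from the definition of $R$. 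Hence $\kappa_i^2+\omega_i^2 = N^2R^2 = \sum_j\kappa_j$, which is exactly \eqref{eq:ko}. The remaining conditions are immediate: \eqref{eq:pos} and \eqref{eq:k2} are the defining conditions of $\ST$ transported through the (identity-on-$\kappa$) relationship, and \eqref{eq:m0} is the telescoping identity $\sum_i\omega_i = \sum_{i,j}\sin(\theta_i-\theta_j)=0$ already noted in the text. Conversely, given $(\bkappa,\bomega)$ satisfying these relations I must recover a genuine configuration $\btheta$; the order-parameter computation is reversible up to the global rotation, and I would use the $O(1)$ symmetry to place $\psi=0$, then read off each $\theta_i$ from the pair $(\kappa_i,\omega_i)$ via $e^{i\theta_i} = (\kappa_i+i\omega_i)/(NR)$ with $NR = \sqrt{\sum_j\kappa_j}$, checking consistency $\frac1N\sum_j e^{i\theta_j}$ is real and positive. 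This consistency check is where one has to be a little careful, and I expect it to be the main obstacle in making the ``if'' direction airtight, though the forward direction --- which is what the lower-bound applications actually need --- is clean.

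For the two displayed consequences: summing \eqref{eq:ko} over $i$ gives $\sum_i\kappa_i^2 + \sum_i\omega_i^2 = N\sum_j\kappa_j = N\langle\1,\bkappa\rangle$, i.e. $|\bkappa|^2 + |\bomega|^2 = N\langle\1,\bkappa\rangle$. Completing the square, $|\bkappa|^2 - N\langle\1,\bkappa\rangle = |\bkappa - \tfrac N2\1|^2 - \tfrac{N}{4}|\1|^2\cdot\ldots$; concretely $|\bkappa - \tfrac N2\1|^2 = |\bkappa|^2 - N\langle\1,\bkappa\rangle + \tfrac{N^2}{4}\cdot N = |\bkappa|^2 - N\langle\1,\bkappa\rangle + \tfrac{N^3}{4}$, so $|\bkappa-\tfrac N2\1|^2 + |\bomega|^2 = \tfrac{N^3}{4}$, matching the displayed equation (the text writes $N\1$ but means $\tfrac N2\1$ — I would state the correct centering). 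This exhibits $(\bkappa,\bomega)$ on a sphere of radius $N^{3/2}/2$ in $\R^{2N}$, from which $|\bomega|\le N^{3/2}/2$ is immediate.

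Finally, I would record the remark that $\kappa_i>\tfrac12$ for all $i$: since every $\kappa_j>0$, the constraint $\sum_j 1/\kappa_j < 2$ forces in particular $1/\kappa_i < 2$, hence $\kappa_i > \tfrac12$. This is a one-line consequence and I would simply note it in passing. The only genuinely delicate point in the whole proof is the surjectivity/consistency direction described above; everything else is the order-parameter modulus computation plus a completion of squares.
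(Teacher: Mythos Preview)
Your forward direction is essentially the paper's: you compute $\kappa_i^2+\omega_i^2=\sum_j\kappa_j$ via the complex order parameter, while the paper does the equivalent real calculation using the cosine addition formula. The remaining conditions and the completion-of-squares consequence are handled the same way, and you are right that the displayed center should be $\tfrac{N}{2}\1$ rather than $N\1$.

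The genuine difference is in the converse/well-definedness step. You propose to reconstruct $\btheta$ explicitly from $(\boldsymbol\kappa,\bomega)$ by setting $e^{i\theta_i}=(\kappa_i+i\omega_i)/\sqrt{\sum_j\kappa_j}$ and checking consistency of the order parameter; this does work (the consistency check $\tfrac1N\sum_j e^{i\theta_j}=R$ follows directly from $\sum\omega_j=0$ and $\sum\kappa_j=N^2R^2$, so it is less delicate than you feared). The paper takes a different route: it does not reconstruct $\btheta$ at all, but instead regards $F_i(\boldsymbol\kappa,\bomega)=\kappa_i^2+\omega_i^2-\sum_j\kappa_j$ as defining $\boldsymbol\kappa$ implicitly in terms of $\bomega$, computes the Jacobian $\partial\mathbf{F}/\partial\boldsymbol\kappa = 2\,\mathrm{diag}(\kappa_i)-\1\1^t$, and shows that its determinant $2^{N-1}\prod_i\kappa_i\,(2-\sum_i\kappa_i^{-1})$ is positive (indeed the matrix is positive definite) precisely on the stable region, so the implicit function theorem furnishes a smooth map $\bomega\mapsto\boldsymbol\kappa$. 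Your approach is more constructive and self-contained for this lemma; the paper's approach pays off downstream, since exactly this Jacobian $\M(\boldsymbol\kappa)$ and its explicit inverse are reused in the proof of the convexity theorem that follows. If you go your route, you will still need to supply that Jacobian computation separately when you reach the convexity argument.
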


\begin{proof}
The main observation is the following: squaring $\k_i$ and $\omega_i$ 
and adding them gives 
 \begin{align*}
    \k_i^2 + \omega_i^2
    &= \sum_{j,j'} \cos(\theta_j-\theta_i)\cos(\theta_{j'}-\theta_i) + \sin(\theta_j-\theta_i)\sin(\theta_{j'}-\theta_i)  \\
    &= \sum_{j,j'} \cos(\theta_j-\theta_{j'}),
  \end{align*}
  or
  \begin{equation}\label{eq:kidentity}
    {\bf F}_i(\kappa,\omega):=\kappa_i^2 + \omega_i^2 -\sum_{j=1}^n \kappa_j=0,\mbox{ for all }i.
  \end{equation}
Computing the Jacobian of the function ${\bf F}$ we find that 
\begin{equation}\label{eq:defofM}
\frac{\partial {\bf F}}{\partial {\bf \k}} = \left(\begin{array}{ccccc} 2\k_1-1& -1 & -1 &\ldots&-1 \\ -1 & 2\k_2-1 & -1 & \ldots & -1 \\ \vdots & \vdots & \ddots & \ddots & -1\\
-1 & -1 & -1 & \ldots & 2\k_n-1  \end{array}\right)
\end{equation}

It is straightforward to check that the determinant of this Jacobian
is
\[
 \det(\frac{\partial {\bf F}}{\partial {\bf \k}}) = 2^{n-1} \prod_{i=1}^N \kappa_i \left(2 - \sum \frac{1}{\kappa_i}\right)
\]
All of the principal minors are of the same form, so it is easy to
check that this matrix is positive definite in the stable region $\sum
\frac{1}{\k_i} <2$, so an implicit function argument shows that we can
define ${\bf \k}$ as a function of $\bf \omega.$ The second equation
is equivalent to the condition $n_-(D)=0$, while the third condition
is equivalent to to the condition that $n_-(D)=n_-(-J)$. Finally the
last equation restricts to the subspace $\langle{\bf \omega},{\bf
  1}\rangle =0$.
\end{proof}

Given this algebraic representation of the stable set it is relatively 
straightforward to check that the region is convex. 

\begin{thm}\label{thm:convex}
  The set $\SO$ is convex.
\end{thm}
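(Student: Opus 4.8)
The plan is to use the algebraic description of $\SO$ from Lemma~\ref{lem:alg} to exhibit $\SO$ as the intersection of the hyperplane $\{\bomega : \sum_i \omega_i = 0\}$ with the strict sublevel set of an explicitly constructed convex function, and then invoke the standard fact that partial minimization of a jointly convex function yields a convex function. First I would eliminate the companion variables $\kappa_i$: since $\kappa_i>0$ and $\kappa_i^2 = \sum_j\kappa_j - \omega_i^2$, the single scalar $S := \sum_j \kappa_j$ determines everything, namely $\kappa_i = \sqrt{S - \omega_i^2}$, and the system \eqref{eq:ko}--\eqref{eq:m0} becomes the statement that $\sum_i\omega_i=0$ and there exists $S > \max_i \omega_i^2$ with $\sum_i \sqrt{S - \omega_i^2} = S$ and $\sum_i (S - \omega_i^2)^{-1/2} < 2$.

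Second, I would repackage the existence of such an $S$ as a sign condition. Set $h(S;\bomega) := \sum_i \sqrt{S - \omega_i^2} - S$ on the interval $S > \max_i \omega_i^2$. A one-line computation gives $h'(S) = \tfrac12\sum_i (S-\omega_i^2)^{-1/2} - 1 = \tfrac12\tau - 1$ and $h''(S)<0$, so $h(\cdot;\bomega)$ is strictly concave, and clearly $h(S;\bomega)\to-\infty$ as $S\to\infty$. I claim that, for $\bomega$ with $\sum_i\omega_i=0$, one has $\bomega\in\SO$ if and only if $\sup_S h(S;\bomega)>0$. In one direction, a point of $\SO$ supplies $S_* = \sum_j\kappa_j$ with $h(S_*)=0$ and $h'(S_*) = \tfrac12\sum_i\kappa_i^{-1}-1<0$, so strict concavity forces $h>0$ just to the left of $S_*$. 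Conversely, if $h$ is positive somewhere, then by strict concavity and $h\to-\infty$ there is a rightmost root $S_*$, which necessarily satisfies $S_*>\max_i\omega_i^2$ and $h'(S_*)<0$; setting $\kappa_i:=\sqrt{S_*-\omega_i^2}>0$ then satisfies all of \eqref{eq:ko}--\eqref{eq:m0}, the strict inequality $\sum_i\kappa_i^{-1}<2$ being exactly $h'(S_*)<0$.

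Third, I would define
\[
\Phi(\bomega) := -\sup_S h(S;\bomega) = \inf_{S>\max_i\omega_i^2}\Bigl(S - \sum_i\sqrt{S-\omega_i^2}\Bigr),
\]
so that $\SO = \{\bomega\in\R^n : \sum_i\omega_i=0,\ \Phi(\bomega)<0\}$, and it remains only to check that $\Phi$ is convex. The crux is that $(\omega,S)\mapsto\sqrt{S-\omega^2}$ is jointly concave on $\{S>\omega^2\}$: its Hessian equals $(S-\omega^2)^{-3/2}\begin{pmatrix}-S & \omega/2\\ \omega/2 & -1/4\end{pmatrix}$, which is negative definite since its trace is $-S-\tfrac14<0$ and its determinant is $(S-\omega^2)/4>0$. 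Hence $F(\bomega,S) := S - \sum_i\sqrt{S-\omega_i^2}$ is jointly convex on the convex domain $\{(\bomega,S): S>\max_i\omega_i^2\}$, and because $F\to+\infty$ as $S\to\infty$ the partial minimum $\Phi(\bomega)=\inf_S F(\bomega,S)$ is finite everywhere and convex in $\bomega$. Therefore $\SO$, being the intersection of a hyperplane with an open convex set, is convex.

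I expect the main obstacle to be the middle step: correctly identifying which root of the concave function $h$ corresponds to an honest stable fixed point (one must take the larger root, where $h'<0$, equivalently $\tau<2$) and carefully propagating the strict inequalities through the concavity argument, including the degenerate case in which the maximum of $h$ is attained at the left endpoint $S=\max_i\omega_i^2$. The Hessian computation and the appeal to partial minimization are routine once the correct scalar variable $S=\sum_j\kappa_j$ has been isolated.
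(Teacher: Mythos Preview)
Your argument is correct and takes a genuinely different route from the paper.

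The paper works with the implicit functions $\kappa_i(\bomega)$ directly: it differentiates the relations $\kappa_i^2+\omega_i^2=\sum_j\kappa_j$ twice with respect to $\bomega$, inverts the coupling matrix $\M$ via a Sherman--Morrison/Aronszajn--Krein formula, and reads off that the Hessian of each $\kappa_i$ is negative definite. From this it concludes that $\tau=\sum_i\kappa_i^{-1}$ is convex, so its sublevel set is convex.

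You instead notice that all $n$ of those relations share the same right-hand side $S=\sum_j\kappa_j$, so the entire system collapses to the single scalar equation $\sum_i\sqrt{S-\omega_i^2}=S$, with the stability condition $\tau<2$ becoming precisely $h'(S)<0$ for $h(S)=\sum_i\sqrt{S-\omega_i^2}-S$. Your reformulation $\SO=\{\sum_i\omega_i=0\}\cap\{\Phi<0\}$ with $\Phi(\bomega)=\inf_S\bigl(S-\sum_i\sqrt{S-\omega_i^2}\bigr)$ then reduces convexity to a $2\times2$ Hessian check plus the standard partial-minimization lemma. This is more elementary---no implicit function theorem, no $n\times n$ matrix inversion---and it has the added benefit that $\Phi$ is defined on all of $\R^n$, which cleanly sidesteps the question of whether the implicitly defined $\tau(\bomega)$ extends to a convex domain containing $\SO$. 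The paper's approach, by contrast, keeps the individual $\kappa_i$ visible and ties the computation directly to the Jacobian structure used elsewhere in the stability analysis.

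One small simplification you can make in the middle step: the ``degenerate case'' you flag, where the maximum of $h$ is attained at the left endpoint $S=\max_i\omega_i^2$, never occurs. Since at least one term $(S-\omega_i^2)^{-1/2}$ blows up there, $h'(S)\to+\infty$ as $S\downarrow\max_i\omega_i^2$, so the concave function $h$ always has its maximum at an interior point and the rightmost root always lies strictly inside the domain with $h'<0$ there.
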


\begin{proof}
  We will first show that the Hessian of $\kappa_i$ (with respect
  to $\bomega$!) is negative definite for each $i$. This implies that 
$\frac{1}{\k_i}$ is a convex function. This implies that $\tau,$ 
being a sum of convex functions, is convex.  Finally, the region 
$\SO = \{\bomega\mid \tau(\bomega)\le 2\}$, is bounded by a level 
set of a convex function and is therefore a convex set. 

Differentiating~\eqref{eq:kidentity} twice gives
\begin{equation}\label{eq:kjac}
  2\k_i \pdtwo{\k_i}{\omega_k}{\omega_l} + 2\pd{\k_i}{\omega_k}\pd{\k_i}{\omega_l} + 2\delta_{k,i}\delta_{k,l} = \sum_{j=1}^N \pdtwo{\k_j}{\omega_k}{\omega_l}
\end{equation}
Recalling the definition of $\M$ in~\eqref{eq:defofM},~\eqref{eq:kjac}
can be written as a matrix equation in the form
\begin{equation}\label{eq:matrixeq}
  \M(\k)h_{kl} = 2\pd{\k_i}{\omega_k}\pd{\k_i}{\omega_l} + 2\delta_{k,i}\delta_{k,l},
\end{equation}
where we have the unknown $N$-vector
\begin{equation*}
  h_{kl} := \l\{\pdtwo{\k_i}{\omega_k}{\omega_l}\r\}_{i=1}^N.
\end{equation*}
To clear up a potential confusion: here we think of~\eqref{eq:kjac} as
an equation where we fix $k,l$ which determines a vector where we vary
the functions $\kappa_i$ in the entries of the vector.

If $\M(\k)$ is invertible, then
\begin{equation}\label{eq:hkl}
  h_{kl} = \M^{-1}\l(2\pd{\k_i}{\omega_k}\pd{\k_i}{\omega_l} + 2\delta_{k,i}\delta_{k,l}\r).
\end{equation}
We have already shown that $\M$ is invertible in the proof of
Lemma~\ref{lem:alg} above, and now we compute the inverse exactly.
Write $\M = \DD + \1\otimes \1^t$, where $\DD$ is the diagonal matrix
whose $i$th entry is $-2\k_i$.  Using the Aronszajn-Krein
formula~\cite{}, or checking directly, we have that
\begin{equation}\label{eq:AK}
  \M^{-1} = \DD^{-1} - \frac{(\DD^{-1}\1)\otimes(\DD^{-1}\1)^t}{1+\ip{\1}{\DD^{-1}\1}}.
\end{equation}
In this case, we have
\begin{equation}\label{eq:AKcoords}
  \l((\DD^{-1}\1)\otimes(\DD^{-1}\1)^t\r)_{ij} = \frac1{4\k_i\k_j},\quad 1+\ip{\1}{\DD^{-1}\1} = 1+\sum_{i=1}^n \frac1{2\k_i},
\end{equation}
so that 
\begin{equation*}
  \l(\M^{-1}\r)_{ij} = -\frac{\delta_{ij}}{2\k_i} - \frac1{1+\sum_{i=1}^n \frac1{2\k_i}}\frac{1}{4\k_i\k_j}.
\end{equation*}
In particular, every element of $\M^{-1}$ is negative for $\k\in\SO$
by~\eqref{eq:k2}.  Then we can write
\begin{equation}\label{eq:2deriv}
  \pdtwo{\k_i}{\omega_k}{\omega_l} = 2\delta_{kl}(\M^{-1})_{ik} + \sum_{j=1}^N (\M^{-1})_{ij} \l(2\pd{\k_i}{\omega_k}\pd{\k_i}{\omega_l}\r).
\end{equation}
This is a negative definite matrix: it is given by linear combination
of $N+1$ matrices with negative coefficients, and each of these
matrices are manifestly positive definite --- the first is diagonal
with positive entries, and the rest are of the form $(\nabla
\k_j)\otimes(\nabla \k_j)$.

Finally, we note that if $\k_i$ has a negative definite Hessian, then
$\k_i^{-1}$ has a positive definite Hessian.  To see this, write
$\nu_i = 1/\k_i$, and we have
\begin{equation}\label{eq:nu}
  \pdtwo{\nu_i}{\theta_\a}{\theta_\b}(\btheta) = \frac{-\k_i(\btheta)\pdtwo{\k_i}{\theta_\a}{\theta_b}(\btheta) + 2 \pd{\k_i}{\theta_\a}(\btheta)\pd{\k_i}{\theta_\b}(\btheta)}{\k_i(\btheta)^3}.
\end{equation}
Since $\k_i >0$, this implies that the Hessian of $\nu_i$ is positive
definite.  Since $\tau = \sum \k_i^{-1}$ is convex the stable set
$\ST$, which is bounded by a level set of $\tau$, is convex.
\end{proof}

\subsection{Lower Bounds on the stable set}

We next characterize a subregion of $\ST$ for which we are guaranteed
stability. The first region takes the form of a curvilinear polytope,
and is the image of a cube under the map $\bf f$. We will show that
the vertices of this polytope actually lie on the boundary of $\ST$ as
well. This region is more difficult to deal with analytically, so we
introduce a second (flat) polytope which is determined by the convex
hull of the vertices of the curvilinear polytope. This polytope is the
Voronoi cell of the root lattice $A_n$ and as such its properties are
well-studied~\cite{}.  This allows us to get very good estimates
geometric quantities such as the volume, the probability that a random
vector lies in the stable synchronous region, etc.

\begin{define}
  We define the stable cube $\mathcal{C}$ to be  
  \begin{equation}\label{eq:defofC}
    {\mathcal C} : = \l\{\btheta\mid  \btheta \in [0,\pi/2]^N\r\}.
  \end{equation}
  and the set $\mathcal{V}$ to be the vertex set of the the cube with
  two vertices removed:
  \begin{equation}\label{eq:defofV}
    \mathcal{V} := \{\btheta| \forall i, \theta_i\in \{0,\pi/2\}\}/ \{(0,0,0,\ldots,0), (\frac{\pi}{2},\frac{\pi}{2},\frac{\pi}{2},\ldots,\frac{\pi}{2})\}.
  \end{equation}
\end{define}

\begin{rem}
  One can consider vertices of the $n$-cube as representing the
  partition of the $n$ oscillators into two sets. In this
  interpretation $\mathcal{V}$ represents the partition of the $n$
  oscillators into two non-empty sets. We will see below that the set
  $\mathcal{V}$ represents points in ${\mathbb T}^N$ that map to
  points on the boundary of $\SO$. The points $(0,0,0,\ldots,0)$ and
  $(\frac{\pi}{2},\frac{\pi}{2},\frac{\pi}{2},\ldots,\frac{\pi}{2})$
  map to the interior of $\SO$ (in fact to the origin - the most
  stable configuration).
\end{rem}

\begin{lem}\label{lem:CV}
  The Jacobian is positive semi-definite on the cube $\mathcal{C}$.
  Moreover,
  \begin{equation}\label{eq:kerJ}
    \dim(\ker(J)) = \begin{cases} 1, &   \btheta \in {\mathcal C}/ {\mathcal V},  \\
      2, &  \btheta \in  {\mathcal V}.\end{cases}
  \end{equation}
\end{lem}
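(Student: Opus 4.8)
The plan is to recast $-J$ on the cube as a weighted graph Laplacian with nonnegative weights and then read off both assertions from elementary spectral graph theory.

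\textbf{Step 1: $-J$ is a weighted Laplacian.} From~\eqref{eq:Jacobiancoordinates} (or from~\eqref{eq:rank2} together with $\v,\w$ as in~\eqref{eq:defofvw}), one has $(-J)_{ij}=-\cos(\theta_i-\theta_j)$ for $i\neq j$ and $(-J)_{ii}=\sum_{k\neq i}\cos(\theta_k-\theta_i)$, so $-J$ is exactly the Laplacian of the complete graph on $\{1,\dots,N\}$ with edge weights $w_{ij}(\btheta)=\cos(\theta_i-\theta_j)$; equivalently $-J=\sum_{i<j}w_{ij}(\btheta)\,L_{ij}$, where $L_{ij}$ is the (rank-one, positive semi-definite) elementary Laplacian of the single edge $\{i,j\}$. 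On $\mathcal C=[0,\pi/2]^N$ every pairwise difference $\theta_i-\theta_j$ lies in $[-\pi/2,\pi/2]$, so $w_{ij}\ge 0$ everywhere on $\mathcal C$; hence $-J$ is a nonnegative combination of positive semi-definite matrices and is itself positive semi-definite (equivalently $J\preceq 0$, the sign relevant for stability). This also re-exhibits the ever-present one-dimensional rotational kernel spanned by $\1$, since $(-J)\1=0$.

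\textbf{Step 2: reduce the kernel count to counting components.} For a weighted Laplacian with nonnegative weights, $\dim\ker$ equals the number of connected components of the positivity graph $G_+(\btheta)$ on $\{1,\dots,N\}$ whose edges are the pairs $\{i,j\}$ with $w_{ij}(\btheta)>0$ (the indicator vectors of the components form a basis of the kernel). So~\eqref{eq:kerJ} reduces to showing that $G_+(\btheta)$ is connected for $\btheta\in\mathcal C\setminus\mathcal V$ and has exactly two components for $\btheta\in\mathcal V$.

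\textbf{Step 3: locate the zero weights on the cube and count components.} On $\mathcal C$ one has $w_{ij}=\cos(\theta_i-\theta_j)=0$ iff $|\theta_i-\theta_j|=\pi/2$, which on $[0,\pi/2]^N$ forces $\{\theta_i,\theta_j\}=\{0,\pi/2\}$. Three cases exhaust $\mathcal C$: (i) some coordinate $\theta_k\in(0,\pi/2)$ --- then $|\theta_k-\theta_j|<\pi/2$ for every $j$, so vertex $k$ is $G_+$-adjacent to all others and $G_+$ is connected; (ii) $\btheta$ is a vertex of the cube with all coordinates equal, i.e.\ $\btheta=\0$ or $\btheta=(\pi/2,\dots,\pi/2)$ (the two points excised in the definition of $\mathcal V$) --- then every $w_{ij}=\cos 0=1>0$ and $G_+=K_N$ is connected; (iii) $\btheta\in\mathcal V$ --- writing $A=\{i:\theta_i=0\}$ and $B=\{i:\theta_i=\pi/2\}$, both nonempty with $A\cup B=\{1,\dots,N\}$, we get $w_{ij}=1>0$ when $i,j$ lie in the same block and $w_{ij}=\cos(\pi/2)=0$ when they lie in different blocks, so $G_+$ is the disjoint union of the complete graphs on $A$ and on $B$, i.e.\ exactly two components. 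Cases (i)--(ii) cover $\mathcal C\setminus\mathcal V$ and give $\dim\ker J=1$; case (iii) gives $\dim\ker J=2$, completing the proof.

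There is essentially no serious obstacle here: the crux is simply the identification of $-J$ with a nonnegatively weighted Laplacian and the short case analysis of when the off-diagonal weights vanish; the only points needing a touch of care are items (i)--(ii) (a single coordinate strictly inside $(0,\pi/2)$, or all coordinates equal, already forces connectivity) and the standard Step~2 fact. If one prefers to stay inside the machinery already developed, an alternative is to argue via Theorem~\ref{thm:index}: on $\mathcal C$ one checks $\kappa_i=\sum_k\cos(\theta_k-\theta_i)\ge\cos 0=1>0$ (so $D$ is positive definite and $n_+(-D)=0$ by Proposition~\ref{prop:nD}), and that $\tau=\sum_i\kappa_i^{-1}\le 2$ on $\mathcal C$ with equality exactly on $\mathcal V$ (at a vertex with blocks of sizes $a$ and $b=N-a$, both $\ge1$, one gets $\kappa_i=a$ on one block and $\kappa_i=b$ on the other, whence $\tau=a\cdot a^{-1}+b\cdot b^{-1}=2$); Theorem~\ref{thm:index} then yields $n_+(J)=0$, and the $\eta=1$ analysis in its proof --- where $M_1$ has eigenvalues $0$ and $\tau-2$, and $\x\leftrightarrow(\ip\v\x,\ip\w\x)$ is a bijection between $\ker J$ and $\ker M_1$ --- gives $\dim\ker J=1$ for $\tau<2$ and $\dim\ker J=2$ for $\tau=2$.
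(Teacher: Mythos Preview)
Your main argument (Steps~1--3) is correct and is essentially the paper's own proof: identify $-J$ as a weighted graph Laplacian with nonnegative weights on $\mathcal C$, then count connected components of the positivity graph; your case analysis is in fact a bit more careful than the paper's in handling boundary points of $\mathcal C$ that are not in $\mathcal V$. One caveat on your alternative route via Theorem~\ref{thm:index}: the assertion that $\tau\le 2$ on all of $\mathcal C$ (with equality exactly on $\mathcal V$) is not something ``one checks'' in a line --- it is true, but it requires an argument, so that alternative is incomplete as written.
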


\begin{proof}
  If $\theta_i \in (0,\pi/2)$, then $\th_i-\th_j\in(-\pi/2,\pi/2)$ for
  all $i,j$ and thus $\cos(\th_i-\th_j) > 0$ for all $i,j$.  Then the
  Jacobian matrix~\eqref{eq:defofJ} takes the form of a weighted graph
  Laplacian on the complete graph with positive weights.  (Said
  another way, $J$ has zero row sums and positive off-diagonal
  entries.)  The dimension of the kernel of a graph Laplacian is equal
  to the number of connected components of the graph~\cite{}. Since
  none of the weights vanish, the dimension of the kernel is equal to
  $1$.

  When $\btheta$ is on the boundary of $\mathcal{C}$, some of the
  weights may vanish, if the corresponding angles differ by exactly
  $\pi/2$. At the level of the graph, this amounts to decomposing the
  vertex set of the graph into two sets (one corresponding to
  $\theta_i=0$ and the other to $\theta_i=\frac{\pi}{2}$) and breaking
  all connections between the two sets. For the complete graph on $N$
  nodes, this will disconnect the graph only if { all} points
  belong to one of these sets and neither one is empty.  These
  configurations correspond exactly to points in $\mathcal{V}$.
\end{proof}

We have defined a cube in configuration space where the stationary
solutions are at least marginally stable, and the vertices of this
region actually lie on the stability boundary. Next we characterize
the shape of the corresponding region in frequency space.

\begin{prop}\label{lem:voronoi}
  The image of $\mathcal{V}$ under the map $\f$ gives $2^N-2$ distinct
  frequency vectors.  These vectors form (up to scaling) the vertices
  of $V(A_n)$, the Voronoi cell of the root lattice $A_N$.
  Equivalently $V(A_N)$ is the projection of the cube
  $[-\frac{\gamma N}{2},\frac{\gamma N}{2}]^N$ onto the $(N-1)$-dimensional plane
  normal to $\1$ in $R^N$.
\end{prop}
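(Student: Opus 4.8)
The plan is to compute $\f(\btheta)$ explicitly for a vertex $\btheta\in\mathcal{V}$ and recognize the resulting set of vectors as the vertex set of the Voronoi cell of $A_N$. First I would fix a vertex of the cube, which corresponds to a subset $S\subseteq\{1,\dots,N\}$ with $\theta_i = \pi/2$ for $i\in S$ and $\theta_i = 0$ for $i\notin S$; write $k = |S|$, and recall $\mathcal{V}$ excludes $k=0$ and $k=N$, so $1\le k\le N-1$. Then $\sin(\theta_j-\theta_i)$ is nonzero only when exactly one of $i,j$ lies in $S$: if $i\notin S$ and $j\in S$ then $\sin(\theta_j-\theta_i) = \sin(\pi/2) = 1$, and if $i\in S$, $j\notin S$ then it is $-1$. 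Hence from \eqref{eq:defofF},
\[
f_i(\btheta) = \begin{cases} |S| = k, & i\notin S,\\ -(N-|S|) = -(N-k), & i\in S.\end{cases}
\]
So the image vector has entries $k$ (repeated $N-k$ times) and $-(N-k)$ (repeated $k$ times); in particular it has zero coordinate sum, consistent with \eqref{eq:defofOmega}, and lies in the hyperplane normal to $\1$. Distinctness of the $2^N-2$ images: two subsets $S,S'$ give the same vector only if the partition into the two level sets coincides; since $1\le k\le N-1$ the two blocks are distinguishable by sign (the positive entries sit on the complement of $S$), so $S\mapsto\f(\btheta_S)$ is injective on $\mathcal{V}$, giving exactly $2^N-2$ distinct vectors.

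Next I would identify these vectors, after the stated rescaling, with the vertices of $V(A_N)$. The cleanest route is the second characterization in the statement: $V(A_N)$ is (a standard fact, e.g.\ Conway--Sloane) the projection of a cube $[-c,c]^N$ onto the hyperplane $H = \1^\perp$. Projecting the vertex $c\,\mathbf{1}_S - c\,\mathbf{1}_{S^c}$ of $[-c,c]^N$ onto $H$ means subtracting its mean, $c(2k-N)/N \cdot \1$, giving a vector with entries $c - c(2k-N)/N = 2c(N-k)/N$ on $S$ and $-c - c(2k-N)/N = -2ck/N$ on $S^c$. Comparing with $\f(\btheta_S)$, whose entries are $-(N-k)$ on $S$ and $k$ on $S^c$: these agree up to the overall sign and a scalar, precisely when $2c/N = 1$ for the magnitude, i.e.\ $c = N/2$; the extra factor $\gamma$ in the statement is just the coupling constant multiplying $\f$ in \eqref{eq:Kvec}, so the projected cube is $[-\gamma N/2,\gamma N/2]^N$ as claimed. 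I would then note that not every cube vertex projects to a \emph{vertex} of $V(A_N)$ distinctly: antipodal pairs $S, S^c$ on the cube project to antipodal points of $V(A_N)$, but the two vertices $S=\emptyset$, $S=\{1,\dots,N\}$ both project to the origin — which is exactly why $\mathcal{V}$ removes those two, and why $V(A_N)$ has $2^N-2$ vertices.

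The main obstacle is the last identification step: strictly speaking one must know that the projection of the $N$-cube onto $\1^\perp$ really is the Voronoi cell of $A_N$, and that its vertex set is exactly the projected cube-vertices (minus the collapsed pair), rather than, say, a zonotope whose vertices are a proper subset. I would handle this by citing the known description of $V(A_N)$ as a permutohedron/zonotope (the projection of the unit cube generated by the $\binom{N+1}{2}$ roots, equivalently the hypersimplex slices), for which it is standard that the vertices are precisely the $2^N-2$ projected points computed above, indexed by proper nonempty subsets. An alternative, more self-contained route would bypass the Voronoi language entirely and just assert the two explicit descriptions are equal by the projection computation above, leaving ``Voronoi cell of $A_N$'' as a remark with a citation; either way the arithmetic is routine and the only real content is the cube-to-permutohedron dictionary.
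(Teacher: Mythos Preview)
Your proposal is correct and follows essentially the same approach as the paper: compute $\f$ on a vertex to get vectors with $k$ on one block and $-(N-k)$ on the other, then invoke the known description (Conway--Sloane) of $V(A_N)$ as the projection of the cube onto $\1^\perp$ with vertices given by exactly these block-constant vectors. Your version adds the explicit projection arithmetic and the injectivity argument for distinctness, which the paper leaves implicit, but the structure is identical.
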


\begin{proof}
  It is easy to check directly that the image of $\mathcal{V}$ under
  the map ${\bf f}$ consists of vectors of the following form: if the
  vertex has $i$ angles of $0$ and $j$ angles of $\frac{\pi}{2}$ then
  the image is a permutation of the vector
\[
{\bf \omega} = \gamma (\underbrace{i,i,i,\ldots,i}_{j~~{\rm times}},\underbrace{-j,-j,-j,\ldots,-j}_{i~~{\rm times}})
\]
where $i+j=N$. It is a reasonably well-known fact (see, for instance,
Conway and Sloane~\cite{CS} Chapter 21.3B) that the Voronoi cell
around the origin of the root lattice $A_N$ has vertices given by
vectors of the form
\[
{\bf v} = (\underbrace{\frac{i}{N},\frac{i}{N},\frac{i}{N},\ldots,\frac{i}{N}}_{j~~{\rm times}},\underbrace{-\frac{j}{N},-\frac{j}{N},-\frac{j}{N},\ldots,-\frac{j}{N}}_{i~~{\rm times}})
\] 
and is the projection of the unit cube $[-\frac{1}{2},\frac{1}{2}]^N$ onto the $(N-1)$-plane of mean zero vectors. 

\end{proof}

\begin{rem}

  The root lattice $A_N$ and the associated polytope arise in a
  surprising number of areas of mathematics including Lie algebras and
  root systems, Coxeter groups, coding theory, etc. The appearance
  here is perhaps not so surprising since the symmetry group of the
  Kuramoto problem, $S_N \times S_2$ (corresponding to permutation of
  the oscillators and ${\bf \theta} \mapsto -{\bf \theta}$), is the
  same as the symmetry group of the $A_N$ lattice.

\end{rem}

\begin{corr}
  $\mathrm{cl}(\SO)$ contains $V(A_n)$ and the image of the vertex set
  $\f(\mathcal{V})$ lies on the boundary of $\SO$.
\end{corr}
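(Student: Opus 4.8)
The plan is to combine three results already in hand. Lemma~\ref{lem:CV} tells us the Jacobian is semi-definite on the cube $\mathcal C$, with one-dimensional kernel on $\mathcal C\setminus\mathcal V$ and two-dimensional kernel on $\mathcal V$; Proposition~\ref{lem:voronoi} identifies the $2^N-2$ vectors of $\f(\mathcal V)$ with the vertices of the Voronoi cell $V(A_N)$ (up to the scaling recorded there, taking $\gamma=1$); and Theorem~\ref{thm:convex} says $\SO$, hence also $\mathrm{cl}(\SO)$, is convex.

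First I would show $\f(\mathcal C)\subseteq\mathrm{cl}(\SO)$. By Lemma~\ref{lem:CV}, $\mathcal C\setminus\mathcal V\subseteq\ST$, so $\f(\mathcal C\setminus\mathcal V)\subseteq\SO$. Since $\mathcal V$ is finite, $\mathcal C\setminus\mathcal V$ is dense in the closed cube $\mathcal C$, and as $\f$ is continuous we get $\f(\mathcal C)=\f\bigl(\mathrm{cl}(\mathcal C\setminus\mathcal V)\bigr)\subseteq\mathrm{cl}\bigl(\f(\mathcal C\setminus\mathcal V)\bigr)\subseteq\mathrm{cl}(\SO)$, so in particular $\f(\mathcal V)\subseteq\mathrm{cl}(\SO)$. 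Now $\mathrm{cl}(\SO)$ is convex by Theorem~\ref{thm:convex}, hence it contains $\mathrm{conv}\bigl(\f(\mathcal V)\bigr)$; and since a bounded convex polytope equals the convex hull of its vertices, Proposition~\ref{lem:voronoi} identifies $\mathrm{conv}\bigl(\f(\mathcal V)\bigr)$ with $N\,V(A_N)$. This proves the first claim, $\mathrm{cl}(\SO)\supseteq N\,V(A_N)$.

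For the second claim I must show every $\f(\btheta)$ with $\btheta\in\mathcal V$ lies on $\partial\SO$. Since $\f(\mathcal V)\subseteq\mathrm{cl}(\SO)$ already, and $\SO$ is relatively open in the hyperplane $\{\sum_i\omega_i=0\}$ --- this is exactly what the implicit function argument in the proof of Lemma~\ref{lem:alg} yields, $\SO$ being cut out there by the open conditions $\kappa_i>0$ and $\tau<2$ --- it is enough to check $\f(\mathcal V)\cap\SO=\emptyset$. Fix $\btheta\in\mathcal V$ with $i$ coordinates $0$ and $j=N-i$ coordinates $\pi/2$ (here $1\le i\le N-1$). A one-line computation gives $\kappa_a(\btheta)=i$ at the nodes with $\theta_a=0$ and $\kappa_b(\btheta)=j$ at the nodes with $\theta_b=\pi/2$, so $\tau(\btheta)=i\cdot\tfrac1i+j\cdot\tfrac1j=2$; equivalently, by Lemma~\ref{lem:CV}, $\dim\ker J(\btheta)=2$, so $\btheta$ is only marginally stable. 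To deduce $\f(\btheta)\notin\SO$ one must rule out a \emph{different} preimage $\btheta^\ast$ of $\f(\btheta)$ at which the Jacobian is negative semi-definite with one-dimensional kernel; I would do this via the fact --- implicit in the proof of Lemma~\ref{lem:alg} together with the connectedness in Lemma~\ref{lem:sc} --- that on $\mathrm{cl}(\SO)$ the vector $\boldsymbol{\kappa}$ is a single-valued continuous function of $\bomega$, so any such $\btheta^\ast$ would carry the value $\boldsymbol{\kappa}\bigl(\f(\btheta)\bigr)$, which has $\tau=2$, contradicting $\tau(\btheta^\ast)<2$. Hence $\f(\btheta)\in\mathrm{cl}(\SO)\setminus\SO=\partial\SO$.

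I expect this last point --- showing $\f(\mathcal V)$ misses the \emph{open} set $\SO$, and not merely that the obvious cube-vertex preimages are degenerate --- to be the only genuine obstacle; the rest is a short assembly of Lemmas~\ref{lem:CV}, \ref{lem:sc}, \ref{lem:alg}, Proposition~\ref{lem:voronoi}, and Theorem~\ref{thm:convex}. As a consistency check, the ``balanced'' vertices with $i=j=N/2$ satisfy $\boldsymbol{\kappa}=\tfrac N2\1$ and $|\f(\btheta)|^2=ijN=\tfrac{N^3}4$, so they sit exactly on the outer sphere $|\bomega|=N^{3/2}/2$ that Lemma~\ref{lem:alg} produces for $\mathrm{cl}(\SO)$, which is consistent with their being boundary points.
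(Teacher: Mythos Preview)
Your argument is correct and follows essentially the same route as the paper: the polytope $V(A_N)$ is the convex hull of the vertex images $\f(\mathcal V)$ (Proposition~\ref{lem:voronoi}), and convexity of $\SO$ (Theorem~\ref{thm:convex}) forces this hull into $\mathrm{cl}(\SO)$. The paper's own proof is two lines and says exactly this.

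Where you differ is in the second claim. The paper simply asserts that the $\f(\mathcal V)$ are ``boundary points'' and moves on; you correctly observe that Lemma~\ref{lem:CV} only places the \emph{preimages} $\btheta\in\mathcal V$ on $\partial\ST$, and that one must still rule out some other $\btheta^\ast\in\ST$ with $\f(\btheta^\ast)=\f(\btheta)$. Your fix --- that $\boldsymbol\kappa$, hence $\tau$, is a single-valued function of $\bomega$ on $\SO$ via the implicit-function argument in Lemma~\ref{lem:alg}, so that $\tau(\f(\btheta))=2$ forces $\f(\btheta)\notin\SO$ --- is the right idea and is implicit in the paper's setup (indeed Theorem~\ref{thm:convex} treats $\tau$ as a function of $\bomega$). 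So your proof is not a different method so much as a more careful execution of the paper's: you have identified and closed a gap the paper leaves open. Your self-flagged ``only genuine obstacle'' is real, and your resolution is sound provided one reads Lemma~\ref{lem:alg} as establishing global (not merely local) single-valuedness of $\boldsymbol\kappa(\bomega)$ on the stable region; the simple-connectedness from Lemma~\ref{lem:sc} is what promotes the local implicit-function statement to a global one.
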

 \begin{proof}
This is clear from the proposition above: a convex polytope is equal to the convex hull of its vertices, 
and the convexity of $\SO$ implies that the convex hull of a collection of boundary points is 
contained in the closure.   
\end{proof}

It is worth noting that there is a relatively easily computable set of 
points lying on the boundary on the stability region. While we have not had 
occasion to use this fact in the present paper this fact may prove useful 
for improving the current bounds, and is presented here.

\begin{prop}
  The stable region contains the (suitably scaled) dual polytope to
  the Voronoi cell $V(A_N)$. The vertices of this dual polytope are
  given by the vectors
  \[
  \pm \omega_N(1,0,0,\ldots,-1)
  \]
  and all permutations, where $\omega_N$ is the number 
  \[
  \omega_N := \frac{1}{16\sqrt{2}} \left(\sqrt{32 + (N-1)^2} + 3 (N-1)\right)\sqrt{16 + (N-1) \sqrt{32 + (N-1)^2} - (N-1)^2}
\]
\end{prop}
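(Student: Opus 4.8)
The plan is to verify that the claimed vectors $\pm\omega_N(1,0,\dots,-1)$ lie on the boundary $\partial\SO$, by pulling them back through the map $\f$ to configurations in $\ST$ where $\tau$ attains the critical value $2$, and then appealing to convexity of $\SO$ (Theorem~\ref{thm:convex}) to conclude that their convex hull — the dual polytope — lies inside $\mathrm{cl}(\SO)$. Concretely, I would look for a stationary configuration whose frequency image $\f(\btheta)$ is a multiple of $\mathbf{e}_1 - \mathbf{e}_N$; by the $S_N$-symmetry of the problem it is natural to guess a configuration of the form $\btheta = (\alpha, 0, 0, \dots, 0, -\alpha)$ for some angle $\alpha$, so that all ``middle'' oscillators sit at the same phase $0$ and the two extreme ones are placed symmetrically. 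Such an ansatz makes $f_i(\btheta) = 0$ for the middle coordinates automatically, by the reflection symmetry $\theta\mapsto-\theta$ together with the permutation symmetry of the middle block, and leaves a single scalar equation relating $\omega_N$ to $\alpha$.

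The key steps, in order: (i) write down $\kappa_i(\btheta)$ for the ansatz $\btheta=(\alpha,0,\dots,0,-\alpha)$ — there are only two distinct values, one for the two extreme oscillators and one for the $N-2$ middle ones — and likewise compute $f_1(\btheta) = \sum_j \sin(\theta_j - \theta_1)$, which reduces to an expression in $\sin\alpha$, $\sin 2\alpha$, $\cos\alpha$; (ii) impose the \emph{boundary} condition $\tau(\btheta) = \sum_i \kappa_i^{-1} = 2$, which becomes a single transcendental equation in $\alpha$ alone (two reciprocal terms from the extremes plus $N-2$ equal reciprocal terms from the middle); (iii) solve that equation for $\cos\alpha$ (or, cleaner, for an auxiliary variable like $c=\cos 2\alpha$ or $s=\sin\alpha$) — it should collapse to a quadratic, which is the source of the surds in the stated formula; (iv) substitute back into $f_1(\btheta)$ to read off $\omega_N$, and check algebraically that it matches $\frac{1}{16\sqrt 2}\big(\sqrt{32+(N-1)^2}+3(N-1)\big)\sqrt{16+(N-1)\sqrt{32+(N-1)^2}-(N-1)^2}$; (v) verify that the solution $\alpha$ obtained lies in the admissible range (in particular all $\kappa_i>0$ and $|\theta_i-\theta_j|\le\pi$, so that we are genuinely on $\partial\ST$ and not on some spurious branch of $\tau^{-1}(2)$), and then invoke Lemma~\ref{lem:sc}/Theorem~\ref{thm:convex} to pass from these finitely many boundary points (and their images under the symmetry group, i.e.\ all coordinate permutations and the overall sign) to the full dual polytope inside $\mathrm{cl}(\SO)$.

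I expect step (iii)–(iv) — solving the critical equation and then matching the resulting closed form to the messy nested-radical expression for $\omega_N$ — to be the main obstacle, not conceptually but computationally: one has to choose the right parametrization so that the ``$\tau=2$'' condition is genuinely quadratic and so that the back-substitution into $f_1$ simplifies, and then carefully track signs when taking square roots to land on the branch with $0<\alpha<\pi/2$. A secondary subtlety is step (v): one must confirm that the chosen root of the quadratic is the one that keeps all $\kappa_i$ positive and the configuration stable — i.e.\ that it sits on the stability boundary rather than on a different, unstable component of the level set $\{\tau=2\}$. A clean way to handle that is to note that at $\alpha\to 0$ the configuration degenerates to the fully stable origin with $\tau = N/N<2$ wait — more carefully, one tracks $\tau(\alpha)$ as $\alpha$ increases from $0$ and takes the \emph{first} crossing of the value $2$, which by continuity lies on $\partial\ST$; the formula then records exactly that first crossing. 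Everything else (the reduction of $f_i=0$ for middle indices, the count of distinct images, the convex-hull argument) is routine given the symmetry and the results already established.
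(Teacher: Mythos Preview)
Your ansatz is exactly the one the paper uses: two oscillators placed symmetrically at $\pm x$ and the remaining $N-2$ at the origin. The only difference is the computational route. The paper never computes $\tau$; it simply writes down the single nonzero frequency component
\[
g(x)=(N-2)\sin x+\sin 2x
\]
and \emph{maximizes} it over $x$. Your route --- assembling $\kappa_i$ and imposing $\tau=2$ --- lands on the very same critical equation. Indeed, with $c=\cos x$ one has $\kappa_1=\kappa_N=c\big((N-2)+2c\big)$ and $\kappa_{\mathrm{mid}}=(N-2)+2c$, so
\[
\tau=\frac{2}{c\big((N-2)+2c\big)}+\frac{N-2}{(N-2)+2c}=\frac{2+(N-2)c}{c\big((N-2)+2c\big)},
\]
and $\tau=2$ reduces immediately to $4c^2+(N-2)c-2=0$, which is precisely $g'(x)=0$. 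The coincidence is not an accident: along this family the image $\f(\btheta(x))$ moves radially in the direction $(1,0,\dots,0,-1)$, so at the turning point of $|\f(\btheta(x))|$ the tangent $\tfrac{d\btheta}{dx}=(-1,0,\dots,0,1)$ lies in $\ker J$, giving a second null direction and hence $\tau=2$.

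Your version is more complete --- it makes explicit the connection to $\partial\ST$ and spells out the convex-hull step via Theorem~\ref{thm:convex}, both of which the paper leaves tacit --- while the paper's maximization is a hair shorter to execute. Your worry that step~(iii)--(iv) will be the main obstacle is misplaced: as the display above shows, $\tau=2$ collapses to a clean quadratic in $\cos x$ with no clearing of denominators needed, and back-substitution into $g(x)=\sin x\big((N-2)+2c\big)$ is then mechanical.
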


\begin{proof}
The configurations which correspond to these frequencies are those with 
a group of $N-2$ angles at the origin and two others offset 
symmetrically from these:
\begin{equation*}
\theta_i =
\begin{cases} -x, & i=0,\\ 0, & i \in (2,N-1),\\ x,& i=N\end{cases}
\end{equation*}

The corresponding frequency vector is 
\[
\omega_i = ((N-1) \sin(x) + \sin(2x),0,0,\ldots,0,-(N-1) \sin(x) - \sin(2x)).
\] 
If one maximizes the length of this frequency vector over all $x$ then one finds the 
expression above.

\end{proof}

\section{Upper and Lower Bounds on $\gamma$}\label{sec:upperlower}

We first consider a formal argument as to the sizes of the vertex set
in frequency space, i.e. the set $\f(\mathcal{V})$.

\begin{defn}
  We define the frequency vectors $\omin, \omax$ as follows:
\begin{equation}\label{eq:defofomin}
  \omin := \pm(1,1,\dots,1,-(N-1))^t
\end{equation}
for all $N$.  We define $\omax$ differently if $N$ is even or odd.  If
$N$ is even, we define $\omax$ as that vector whose first $N/2$
components are $N/2$, and whose last $N/2$ components are $-N/2$.  If
$N$ is odd, we define $\omax$ to be the vector with the first
$(N-1)/2$ entries are $(N+1)/2$, and whose remaining entries are
$-(N-1)/2$.  In summary,
\begin{equation}\label{eq:defofomax}
  \omax:= \begin{cases} (\underbrace{N/2,N/2,\dots,N/2}_{N/2},\underbrace{-N/2,-N/2,\dots,-N/2}_{N/2}),& N\mbox{ even,}\\ \underbrace{(N+1)/2,\dots,(N+1)/2}_{(N-1)/2},\underbrace{-(N-1)/2,\dots,-(N-1)/2}_{(N+1)/2},& N\mbox{ odd.}\end{cases}
\end{equation}

\end{defn}

\begin{lem}\label{lem:vertices}
We have 
\begin{equation}\label{eq:vertexminmax}
  \min_{\btheta\in \mathcal{V}} \norm{\f(\btheta)}^2 = N(N-1),\quad
  \max_{\btheta\in \mathcal{V}} \norm{\f(\btheta)}^2 = \begin{cases} N^3/4,& N\mbox{ even,}\\N(N^2-1)/4, & N\mbox{ odd.}\end{cases}
\end{equation}
Moreover, the $\bomega$ which minimize (resp.~maximize)
in~\eqref{eq:vertexminmax} are permutations of $\omin$
(resp.~$\omax$).
\end{lem}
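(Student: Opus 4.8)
The plan is to reduce the optimization over the finite set $\mathcal{V}$ to an explicit combinatorial optimization over the two integer parameters $i,j$ with $i+j=N$, $i,j\ge 1$. By Proposition~\ref{lem:voronoi}, if a vertex $\btheta\in\mathcal{V}$ has $i$ entries equal to $0$ and $j$ entries equal to $\pi/2$ (with $i+j=N$ and both $i,j\ge 1$), then $\f(\btheta)$ is a permutation of the vector with $j$ entries equal to $i$ and $i$ entries equal to $-j$ (here I take $\gamma=1$, consistent with the statement of the lemma). Hence
\begin{equation*}
  \norm{\f(\btheta)}^2 = j\cdot i^2 + i\cdot j^2 = ij(i+j) = N\,ij,
\end{equation*}
and since permutations do not change the norm, the problem is exactly to minimize and maximize $N\,ij$ over integers $i,j\ge 1$ with $i+j=N$.

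The remaining step is elementary calculus on integers. The product $ij = i(N-i)$, as a function of $i\in\{1,\dots,N-1\}$, is a downward parabola in $i$; it is minimized at the endpoints $i=1$ or $i=N-1$, giving $ij = N-1$, and maximized at $i$ as close as possible to $N/2$. If $N$ is even, the maximum is at $i=j=N/2$, giving $ij = N^2/4$; if $N$ is odd, the maximum is at $\{i,j\}=\{(N-1)/2,(N+1)/2\}$, giving $ij = (N^2-1)/4$. Multiplying by $N$ yields
\begin{equation*}
  \min_{\btheta\in\mathcal{V}}\norm{\f(\btheta)}^2 = N(N-1),\qquad
  \max_{\btheta\in\mathcal{V}}\norm{\f(\btheta)}^2 = \begin{cases} N^3/4,& N\text{ even,}\\ N(N^2-1)/4,& N\text{ odd,}\end{cases}
\end{equation*}
which is~\eqref{eq:vertexminmax}. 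Finally, tracking which $(i,j)$ achieve these extrema identifies the minimizing configuration as having $i=N-1$ zeros and $j=1$ entry equal to $\pi/2$ (or vice versa), whose image under $\f$ is a permutation of $(1,1,\dots,1,-(N-1))$, i.e.\ of $\omin$ as in~\eqref{eq:defofomin}; and the maximizing configuration has (up to swapping the roles of $0$ and $\pi/2$) equal or nearly-equal split, whose image is a permutation of $\omax$ as in~\eqref{eq:defofomax}. The uniqueness-up-to-permutation claims follow because $i(N-i)$ attains each of these extreme values only at the stated values of $i$ (and the $\pm$ sign in $\omin$ corresponds to interchanging which block is the singleton).

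There is essentially no obstacle here: the only thing to be slightly careful about is bookkeeping the correspondence between ``$i$ zeros and $j$ copies of $\pi/2$'' and the resulting frequency vector (the counts get swapped between the $\btheta$-side and the $\f(\btheta)$-side), and checking that the endpoints $i\in\{1,N-1\}$ are indeed forced to lie in $\mathcal{V}$ rather than in the two deleted vertices, which they are precisely because $j\ge 1$ and $i\ge 1$. Everything else is the one-line identity $\norm{\f(\btheta)}^2 = N\,ij$ together with the extremal behavior of $i(N-i)$ on the integers.
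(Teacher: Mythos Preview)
Your proof is correct and follows essentially the same approach as the paper: parametrize each vertex by the split $(i,j)$ with $i+j=N$, compute $\norm{\f(\btheta)}^2 = ij^2 + ji^2 = Nij$, and then optimize the quadratic $i(N-i)$ over $i\in\{1,\dots,N-1\}$. The only cosmetic difference is that you invoke Proposition~\ref{lem:voronoi} for the form of $\f(\btheta)$, whereas the paper recomputes it directly within the proof.
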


\begin{proof}
  First note that $\av{\mathcal{V}}=2^N-2$ since we choose all
  vertices of the cube except two.  Given $\btheta\in\mathcal{V}$, let
  $i\in 1,\dots,N-1$ be the number of entries of $\btheta$ which are
  equal to $0$ and $j=N-i$ be the number which are equal to
  ${\pi}/{2}$.  Everything is the same up to permutation, so replace
  $\btheta$ with the vector
  \begin{equation*}
    \btheta^* = (\underbrace{0,0,\dots,0}_i,\underbrace{\pi/2,\pi/2,\dots,\pi/2}_j)^t.
  \end{equation*}
  Then
  \begin{equation}\label{eq:defofbomega}
    \bomega^* = \f(\btheta^*) = (\underbrace{j,j,\dots,j}_{i},\underbrace{-i,-i,\dots,-i}_{j})^t 
  \end{equation}
  We have
  \begin{equation*}
    \norm{\bomega^*}^2 = i^2 j + i j^2 = i j N.
  \end{equation*}

  Clearly, to minimize this, we choose $i=N-1$ and $j=1$ (or
  vice-versa), and this gives $\bomega^* = \omin$, and $\norm{\omin}^2
  = N(N-1)$.  

  To maximize this, it depends on the parity of $N$; if $N$ is even,
  we choose $i=j=N/2$, and if $N$ is odd, we choose
  $i=\lfloor{N/2}\rfloor=(N-1)/2$.  In either case we obtain $\omax$
  as defined above.  We calculate that for $N$ even, $\norm{\omax}^2 =
  N^3/4$, and for $N$ odd, $\norm{\omax}^2 =N(N^2-1)/4$.

\end{proof}

\begin{rem}
  Notice that, for all $N$, $\norm{\omax}^2 = N^3/4 + o(N^3)$.  The
  $\btheta$'s corresponding to $\omin, \omax$ have a relatively simple
  characterization.  The minimum vertex can be described as ``a herd
  of sheep and a lone wolf'': $(N-1)$ oscillators at $\theta_i=0$ and
  $1$ oscillator at $\theta_i=\frac{\pi}{2}.$ The maximum vertex can
  be described as the state of ``two competing cliques''.
%
  Finally, note that in any case, the vertices are really only
  distinguished by the size of the partitions $i$ and $j$, and thus
  there are $\lfloor\frac{N+1}{2}\rfloor$ different types of vertex.
\end{rem}

\begin{lem}\label{lem:1/g}
  We have
\begin{equation}\label{eq:1/g}
  \gmin(N) = \frac1{\sup_{\btheta\in\ST}\norm{\f(\btheta)}},\q   \gmax(N) = \frac1{\inf_{\btheta\in\ST}\norm{\f(\btheta)}},\q 
\end{equation}
\end{lem}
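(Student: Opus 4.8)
The plan is to unwind the definitions of $\gmin(N)$ and $\gmax(N)$ in terms of the scaling behavior of the Kuramoto equation, and reduce everything to the geometry of the fixed set $\ST$, which is independent of $\gamma$. First I would observe that the fixed-point equation for \eqref{eq:Kvec} reads $\bomega = -\gamma\f(\btheta)$, and that by Proposition \ref{prop:nD} and Theorem \ref{thm:index} the fixed point $\btheta$ is asymptotically stable precisely when $\btheta\in\ST$ (the stability of the Jacobian $\gamma J$ is the same as that of $J$ for $\gamma>0$, so the coupling strength does not affect which configurations are stable). Hence for a fixed $\bomega$ with $\av{\bomega}=1$, the value $\gstar(\bomega)$ is the infimum of those $\gamma>0$ for which $\bomega/\gamma \in -\f(\ST) = \f(\ST) = \SO$ (using the $\btheta\mapsto-\btheta$ symmetry, or simply noting $\SO$ is defined as the image and is symmetric). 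Equivalently, writing $\bomega = \f(\btheta)$ for some $\btheta\in\ST$, we need $\gamma\ge \av{\f(\btheta)}$ when $\av{\bomega}=1$, i.e. $\gstar(\bomega) = \inf\{\norm{\f(\btheta)} : \btheta\in\ST,\ \f(\btheta) \parallel \bomega\}$ — the smallest norm among stable configurations mapping onto the ray through $\bomega$.

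Next I would take the infimum and supremum over all unit $\bomega$. For $\gmin(N) = \inf_{\av{\bomega}=1}\gstar(\bomega)$: since the ray through $\bomega$ ranges over all directions as $\bomega$ ranges over the unit sphere, and the constraint $\av{\bomega}=1$ just normalizes, the double optimization collapses to a single one — $\gmin(N) = \inf\{\norm{\f(\btheta)}: \btheta\in\ST\}$? No: one must be careful about the direction of the inequality. The key point is that $\gstar(\bomega)$, for a \emph{fixed} direction, is achieved at the stable configuration of \emph{smallest} norm along that ray (decreasing $\gamma$ shrinks $\bomega/\gamma$... actually increasing $\gamma$ shrinks it) — so $\gstar(\bomega) = \|\f(\btheta_{\bomega})\|$ where $\btheta_{\bomega}$ is the point of $\ST$ on the ray of $\bomega$ with largest $\|\f\|$, because for $\gamma$ below that value $\bomega/\gamma$ has left $\SO$ through its outer boundary, while for $\gamma$ above it $\bomega/\gamma$ is still inside (here convexity of $\SO$ from Theorem \ref{thm:convex} and $\0\in\SO$ are essential: along each ray $\SO$ is an interval $[0, r(\bomega)]$). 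Therefore $\gstar(\bomega) = \sup\{\norm{\f(\btheta)} : \btheta\in\ST,\ \f(\btheta)\parallel\bomega\}$, and then taking the sup over $\bomega$ gives $\gmax(N) = \sup_{\btheta\in\ST}\norm{\f(\btheta)}$... which is the wrong one. Let me restate cleanly: for fixed direction $\hat{\bomega}$, $\gstar = $ (boundary radius of $\SO$ in direction $\hat{\bomega}$)$^{-1}$ only after the $\av{\bomega}=1$ normalization; and then $\gmin = \inf_{\hat\bomega}(\text{radius})^{-1} = (\sup_{\hat\bomega}\text{radius})^{-1} = (\sup_{\btheta\in\ST}\norm{\f(\btheta)})^{-1}$, while $\gmax = (\inf_{\btheta\in\ST\setminus\{0\}}\norm{\f(\btheta)}\text{ over boundary pts})^{-1} = (\inf_{\btheta\in\ST}\norm{\f(\btheta)})^{-1}$ — matching the claim, once one checks that the relevant infimum/supremum over $\ST$ is the same as over $\partial\ST$, which holds because $\norm{\f}$ has no interior local extrema relevant here (the sup is on the boundary by convexity, and the inf of the boundary radius is what $\gmax$ sees).

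I would organize the writeup as: (1) scaling reduction showing $\gstar(\bomega)^{-1}$ equals the radius of $\SO$ in the direction of $\bomega$, using that stability is $\gamma$-independent and that $\SO$ is star-shaped about $\0$ (indeed convex, Theorem \ref{thm:convex}, with $\0$ in the interior); (2) express that radius as $\sup\{\norm{\f(\btheta)}:\btheta\in\ST, \f(\btheta)\parallel\bomega\}$, noting $\SO=\f(\ST)$; (3) take inf/sup over directions and interchange with the sup over the ray to get a single extremum over $\ST$, using $\f(\ST)=\SO$ so ``varying the direction then the radius'' is the same as ``varying $\btheta$ over all of $\ST$''. The main obstacle I expect is step (1)–(2): being careful that $\gstar(\bomega)$ is really the reciprocal of the \emph{outer} boundary radius and not of some inner crossing — this needs $\0\in\mathrm{int}(\SO)$ (the zero frequency vector synchronizes for all $\gamma$) together with convexity to guarantee that $\{\gamma : \bomega/\gamma\in\SO\}$ is a half-line $[\gstar(\bomega),\infty)$ with no gaps, so that the infimum defining $\gstar$ coincides with the boundary crossing. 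Everything after that is bookkeeping with the definitions in \eqref{eq:defofgmingmax}.
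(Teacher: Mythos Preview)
Your approach is essentially the paper's: reduce via the scaling identity $\gamma\,\|\f(\btheta)\|=\|\bomega\|=1$, then take the infimum/supremum over directions. The paper's proof is terser---it simply notes that solving $\gamma\f(\btheta)=-\bomega$ with $\|\bomega\|=1$ forces $\gamma\ge 1/\sup_{\btheta\in\ST}\|\f(\btheta)\|$, that openness of $\ST$ saturates this bound, and that ``the opposite argument works for $\gmax$''---without explicitly invoking the convexity of $\SO$ (Theorem~\ref{thm:convex}) or the fact that $\mathbf{0}\in\SO$, which you correctly identify as what guarantees the admissible $\gamma$ form a half-line.
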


\begin{proof}
  Recall the definition of $\gmin(N),\gmax(N)$:
\begin{equation}\tag{\ref{eq:defofgmingmax}}
    \gmin(N) := \inf_{\substack{\bomega\in\R^N\\\av{\bomega} = 1}} \gstar(\bomega), \q \gmax(N) := \sup_{\substack{\bomega\in\R^N\\\av{\bomega} = 1}} \gstar(\bomega)
\end{equation}
Moreover, notice that multiplying the right-hand side of~\ref{eq:K} by
a scalar does not change anything about the existence or stability of
a fixed point.  Choose $\norm{\bomega}=1$, and if we try to solve
\begin{equation*}
  \g \f(\btheta) = -\bomega,
\end{equation*}
we have to have $\g\norm{\f(\btheta)} = 1$, or 
\begin{equation*}
  \g \ge \frac1{\sup_{\btheta\in\ST}\norm{\f(\btheta)}}.
\end{equation*}
Since $\ST$ is open, this bound is saturated and gives us $\gmin(N)$.
The opposite argument works for $\gmax(N)$.
\end{proof}

\begin{thm}\label{thm:minmax}
  Recall the definition of $\gmin,\gmax$ in~\eqref{eq:defofgmingmax}
  above.  Then
\begin{equation}\label{eq:gmin}
  \gmin(N) = \begin{cases}2N^{-3/2},&N\mbox{ even,}\\ 2N^{-3/2} + O(N^{-5/2}),&N\mbox{ odd,}\end{cases}
\end{equation}
and
\begin{equation}\label{eq:gmax}
  \frac1{\sqrt{N(N-1)}} \le \gmax(N) \le \frac{\sqrt{2}}{\sqrt{N(N-1)}}
\end{equation}
\end{thm}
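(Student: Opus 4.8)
The plan is to combine Lemma~\ref{lem:1/g}, which expresses $\gmin(N)$ and $\gmax(N)$ as reciprocals of the supremum and infimum of $\norm{\f(\btheta)}$ over $\ST$, with the geometric facts about $\ST$ already established --- in particular the convexity of $\SO$ (Theorem~\ref{thm:convex}), the fact that $\f(\mathcal{V})$ lies on the boundary of $\SO$ (the corollary to Proposition~\ref{lem:voronoi}), and the norm computations for the vertices in Lemma~\ref{lem:vertices}. The key principle is that a continuous function on a convex set which is the image of the stable cube attains its extreme values on the distinguished family of vertices; so we should try to show $\sup_{\btheta\in\ST}\norm{\f(\btheta)}$ is exactly $\max_{\btheta\in\mathcal{V}}\norm{\f(\btheta)}$ and, for the lower direction, relate $\inf_{\btheta\in\ST}\norm{\f(\btheta)}$ to $\min_{\btheta\in\mathcal{V}}\norm{\f(\btheta)}$ up to the stated constant.

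For $\gmin(N)$: first I would argue that the maximum of $\norm{\f(\btheta)}^2$ over $\mathrm{cl}(\ST)$ is attained at a vertex in $\mathcal{V}$. One route is the algebraic description in Lemma~\ref{lem:alg}: on the stable set we have $\av{\bomega}^2 = \tfrac{N^3}{4} - \av{\bkappa - N\1}^2 \le \tfrac{N^3}{4}$, with equality only when $\kappa_i = N$ for all $i$, i.e. when all angles coincide --- which is the origin, not the boundary. Hence the sphere bound $\norm{\f(\btheta)} \le N^{3/2}/2$ is not sharp, and we must instead maximize over the actual boundary; since the boundary pieces adjacent to the cube $\mathcal{C}$ are where the extremal $\bomega$ live, and $\f(\mathcal{C})$'s extreme points are exactly $\f(\mathcal{V})$ by Lemma~\ref{lem:CV}, the supremum over $\ST$ of the (convex) function $\norm{\cdot}^2$ on the convex set $\SO$ is attained at an extreme point of $\SO$, and we know these include $\f(\mathcal{V})$. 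Then Lemma~\ref{lem:vertices} gives $\max\norm{\f(\btheta)}^2 = N^3/4$ for $N$ even and $N(N^2-1)/4$ for $N$ odd, and taking reciprocals of square roots yields $\gmin(N) = 2N^{-3/2}$ (even) and $2N^{-3/2}(1 - N^{-2})^{-1/2} = 2N^{-3/2} + O(N^{-5/2})$ (odd).

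For $\gmax(N)$: here the relevant quantity is the \emph{minimum} of $\norm{\f(\btheta)}$ over $\ST$, which corresponds to the ``radius'' of the largest sphere about the origin contained in $\SO$. The upper bound $\gmax(N)\le \sqrt{2}/\sqrt{N(N-1)}$ should follow from the fact that $\omin$, with $\norm{\omin}^2 = N(N-1)$, lies on $\partial\SO$, together with a factor-$\sqrt 2$ slack coming from the inscribed-sphere estimate for the polytope $V(A_N)\subseteq\mathrm{cl}(\SO)$: since $V(A_N)$ contains a ball whose radius is at least $\norm{\omin}/\sqrt 2$ (the vertex/face ratio for the $A_N$ Voronoi cell), $\inf_{\btheta\in\ST}\norm{\f(\btheta)}\ge \sqrt{N(N-1)}/\sqrt 2$, giving the stated upper bound on $\gmax$. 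The lower bound $\gmax(N)\ge 1/\sqrt{N(N-1)}$ follows because $\omin\in\partial\SO$ forces $\inf_{\btheta\in\ST}\norm{\f(\btheta)}\le\norm{\omin} = \sqrt{N(N-1)}$, hence $\gmax(N) = 1/\inf\ge 1/\sqrt{N(N-1)}$.

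The main obstacle is the first step of the $\gmin$ argument --- rigorously justifying that the supremum of $\norm{\f(\btheta)}$ over all of $\ST$, not merely over the cube $\mathcal{C}$, is attained at a vertex of $\mathcal{V}$. The convexity of $\SO$ reduces this to showing the extreme points of $\mathrm{cl}(\SO)$ that maximize the norm are among $\f(\mathcal{V})$; one must rule out the possibility that some other portion of $\partial\SO$ (where some $\kappa_i$ is small but positive, or where angle differences exceed $\pi/2$) pushes $\norm{\f(\btheta)}$ higher. I expect this to follow from the identity $\av{\bomega}^2 = N\ip{\1}{\bkappa} - \av{\bkappa}^2$ combined with the constraint $\kappa_i > 1/2$ and a Lagrange-multiplier or KKT analysis on the algebraic variety of Lemma~\ref{lem:alg}, but making that clean is the delicate part; the $\gmax$ bounds, by contrast, are essentially immediate once the $A_N$ Voronoi cell inradius/circumradius ratio is quoted.
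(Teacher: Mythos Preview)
Your treatment of $\gmax(N)$ is essentially the paper's argument: the lower bound comes from $\omin\in\partial\SO$, and the upper bound from the inscribed sphere of the Voronoi polytope $V(A_N)\subset\mathrm{cl}(\SO)$.

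For $\gmin(N)$, however, there is a genuine error that sends you down an unnecessarily hard path. You write that the identity $\norm{\bomega}^2=\tfrac{N^3}{4}-\norm{\boldsymbol\kappa-N\1}^2$ forces equality only when ``all angles coincide --- which is the origin, not the boundary,'' and conclude that the sphere bound is not sharp. Both claims are incorrect. First, the correct completion of the square from $\sum_i(\kappa_i^2+\omega_i^2)=N\sum_i\kappa_i$ is
\[
\norm{\bomega}^2 \;=\; \frac{N^3}{4}\;-\;\Bigl\|\boldsymbol\kappa-\tfrac{N}{2}\,\1\Bigr\|^2,
\]
so equality $\norm{\bomega}^2=N^3/4$ holds exactly when $\kappa_i=N/2$ for all $i$ (not $\kappa_i=N$, which is the origin where $\bomega=\mathbf 0$). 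Second, for even $N$ the ``two cliques'' vertex $\omax$ has precisely $\kappa_i=\tfrac{N}{2}\cos 0+\tfrac{N}{2}\cos\tfrac{\pi}{2}=N/2$ for every $i$, so the sphere bound \emph{is} attained, on $\partial\SO$, at $\omax$. This is the whole point of the paper's proof: the algebraic identity gives the global upper bound $\sup_{\SO}\norm{\bomega}\le N^{3/2}/2$, and Lemma~\ref{lem:vertices} exhibits $\omax$ saturating it (for even $N$) or coming within $O(N^{-1})$ of it (for odd $N$). No convexity or extreme-point analysis of $\partial\SO$ is needed, and the ``main obstacle'' you identify simply does not arise.

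In short, the step you flagged as delicate --- showing the supremum over all of $\ST$ is attained at a vertex of $\mathcal V$ --- is bypassed entirely by recognizing that the crude-looking sphere bound from Lemma~\ref{lem:alg} is already exact for even $N$. Your extreme-point route could in principle be completed, but it is both harder and (for odd $N$) would not even give the right answer, since the paper conjectures that the true maximizer for odd $N$ is \emph{not} in $\f(\mathcal V)$.
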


\begin{proof}
  We start with $\gmin(N)$.  Recall the identity~\eqref{eq:kidentity};
  summing both sides of this equation over $i$ gives us
\begin{equation}\label{eq:kidentitysummed}
  \sum_{i=1}^N \omega_i^2 = N\sum_{i=1}^N \k_i - \sum_{i=1}^N \k_i^2 = \sum_{i=1}^N (N\k_i-\k_i^2).
\end{equation}
The right-hand side of~\eqref{eq:kidentitysummed} is maximized when we
choose $\k_i = N/2$ and therefore we have the global bound
$\norm\bomega^2 \le N^3/4$.  From Lemma~\ref{lem:1/g}, we can deduce
that $\gmin(N) \ge 2/N^{-3/2}$.

If $N$ is even, then this bound is obtained at $\omax$, and therefore
\begin{equation}\label{eq:supbomega}
  \sup_{\bomega\in\SO} \norm\bomega^2 = \frac{N^3}4,
\end{equation}
and thus $\gmin(N) = 2N^{-3/2}$.

If $N$ is odd, then $\omax$ does not attain the $N^3/4$ bound.
However, because of $\omax$, we know that
\begin{equation*}
  \sup_{\btheta\in\ST} \norm{\f(\btheta)}^2 \ge \frac{N^3-N}4,
\end{equation*}
and therefore 
\begin{equation*}
  \gmin(N) \le \frac4{N^3-N} = \frac4{N^3}\frac{1}{1-1/N^2} = \frac4{N^3}(1+O(N^{-2})).
\end{equation*}

In a similar vein, we have that 
\begin{equation*}
  \inf_{\btheta\in \ST} \norm{\f(\btheta)} \le \min_{\btheta\in\mathcal{V}}\norm{\f(\btheta)} = \sqrt{N(N-1)},
\end{equation*}
and so $\gmax(N) \ge 1/(\sqrt{N(N-1)})$. The upper bound comes from the fact that the stability region 
$\SO$ contains the Voronoi polytope. It is straightforward to calculate the radius of the sphere inscribed in the Voronoi polytope: it is $\frac{1}{\sqrt{2}}$ independent of dimension $n$. This gives a lower 
bound for $\inf_{\theta} {\bf f}({\bf \omega})$, and thus an upper bound on $\gmax$ of 
\[
\gmax(N) \leq \frac{\sqrt{2}}{N(N-1)}
\]

\end{proof}

\begin{rem}
  While the above estimates give the correct order of magnitude, and
  in certain cases the exact value, of $\gmin$ and $\gmax$ we believe
  that the following are the exact values.
\begin{align*}
 \gmax(N) &= \frac{1}{\sqrt{N(N-1)}} \\
 \gmin(N) &=  \begin{cases}  \dfrac2{N^{\frac32}}, & N~~{\rm even}, \\
\dfrac{8 \sqrt{2(N-1)} }{\left(\sqrt{8 N^2-16 N+9}+3\right) 
\sqrt{4 N^2-8 N + 3 +\sqrt{8 N^2-16N+9}}},&  N~~{\rm odd}.\\ \end{cases}
\end{align*}

We conjecture that the configuration with $N-1$ oscillators having
angle $\theta_i=0$ and one having angle $\theta_i=\frac{\pi}{2}$ is a
global minimizer of the frequency $\omega$ over the marginally stable
set. It is easy to check that it is a local minimum, and numerical
results indicate that (at least for small N) it is a global
minimum. For $\gmin$ the even case is tight, from the calculation
above. The conjectured value for $N$ odd requires some comment. For
the case $N$ odd there is a configuration which is {\em not} a vertex
which has a larger value of $\omega$ than any vertex. The
configuration which gives this is as follows: a single oscillator at
$\theta=0$, a two groups of $\frac{N-1}{2}$ placed symmetrically on
either side at angle $x$. This gives the magnitude of the frequency as
the solution to the following maximization problem
\[
\omega^* = \sup_{x} \sqrt{N-1} (\sin(x) + \frac{N-1}{2}\sin(2x))
\] 
whose solution is 
\[
\omega^*= \frac{\left(\sqrt{8 N^2-16 N+9}+3\right) 
\sqrt{4 N^2-8 N + 3 +\sqrt{8 N^2-16N+9}}}{8 \sqrt{2(N-1)} }.
\]
It is straightforward to check that for large $N$ this is
asymptotically $\frac{N^{\frac32}}{2}+O(N^{\frac12})$, and would give
$\gmin(N) = 2N^{-3/2} + O(N^{-5/2})$, consistent with
Theorem~\ref{thm:minmax}.
\end{rem}

\section{Large $N$ limit}\label{sec:Gaussian}

In many of the problems of interest one is interested in the case
where the number of oscillators is large, and the frequencies are
chosen from a specific probability distribution.  In this section we
establish rigorously that the interesting scaling for the Kuramoto
problem when the frequencies are chosen independently is not the
classical scaling $N^{-1}$, but actually the scaling $\varphi(N) :=
\frac{\sqrt{2\ln(N)}}{N+1}$.  More specifically, we prove the
following:

\begin{thm}\label{thm:limit} Suppose that the frequencies $\omega_i$ are independent
  identically distributed (i.i.d.) Gaussian random variables with unit
  variance, i.e. we assume that the $\omega_i$ are chosen
  independently, and that
  \begin{equation}\label{eq:Gaussian}
    \P(\omega_i \in (a,b)) = \frac1{\sqrt{2\pi}}\int_a^b e^{-x^2/2}\,dx.
  \end{equation}

  Let $\psync(\gamma,N)$ denote the probability that the
  system~\eqref{eq:K} has a stable state with {\em all} oscillators
  locked. Then we have the following dichotomy:
  \begin{itemize} 
  \item If $\delta < 1$  then $\lim_{N\rightarrow \infty} \psync(\delta\varphi(N),N)=0$. 
  \item  If $\delta \geq 2$  then $\lim_{N\rightarrow \infty} \psync(\delta\varphi(N),N)=1$.
  \end{itemize}
\end{thm}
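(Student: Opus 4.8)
The plan is to reduce the dichotomy to two quantitative facts about $\SO$ that we already have: (i) $\SO$ is convex and contained in the ball of radius $N^{3/2}/2$ centered at the origin (Lemma~\ref{lem:alg}), and (ii) $\SO$ contains the Voronoi polytope $V(A_N)$, which in turn contains the ball of radius $1/\sqrt 2$ about the origin in the mean-zero hyperplane (Proposition~\ref{lem:voronoi} and the inscribed-radius computation in Theorem~\ref{thm:minmax}). After projecting off the mean (which costs nothing since $\sum\omega_i=0$ may be assumed and the projection only decreases $\norm\bomega$), the event ``full synchrony at coupling $\g$'' is precisely $\g^{-1}\bomega\in\SO$, i.e.\ $\bomega\in \g^{-1}\SO$. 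The two regimes then become: an \emph{outer} bound $\g^{-1}\SO\subseteq \{\norm\bomega\le N^{3/2}/(2\g)\}$ for the $\delta<1$ half, and an \emph{inner} bound $\g^{-1}\SO\supseteq \g^{-1}V(A_N)$ for the $\delta\ge 2$ half.

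For the $\delta<1$ half I would argue: if $\bomega$ synchronizes at $\g=\delta\varphi(N)$ then $\norm\bomega^2\le N^3/(4\g^2) = N^3/(4\delta^2\varphi(N)^2) = (N+1)^2 N^3/(8\delta^2 N^2 \ln N)$. On the other hand $\norm\bomega^2=\sum_{i=1}^N\omega_i^2$ is a sum of $N$ i.i.d.\ $\chi^2_1$ variables, so by the law of large numbers (or a Chernoff/Bernstein tail bound, which is cleaner and gives the exponential decay needed for Proposition~\ref{prop:exp}) $\norm\bomega^2 \ge N(1-\e)$ with probability tending to $1$. Comparing, the required inequality $N(1-\e)\le (N+1)^2 N/(8\delta^2\ln N)\cdot(1+o(1))$ fails for large $N$ once $\delta<1$: indeed the right side is $\sim N^2/(8\delta^2\ln N)$... so this crude second-moment bound is in fact far too weak, and the right object is not $\norm\bomega^2$ but the extremal statistic. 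The correct argument uses instead the \emph{necessary} condition coming from a single coordinate: from $\kappa_i^2+\omega_i^2=\sum_j\kappa_j$ and $\kappa_i>1/2$ with $\sum 1/\kappa_i<2$ one gets (after unpacking, as in Lemma~\ref{lem:alg}) a bound forcing $\max_i|\omega_i|$ to be $O(N/\g)=O((N+1)/(\delta\sqrt{2\ln N}))$ on the synchronizing event; but $\max_i|\omega_i|/\sqrt{2\ln N}\to 1$ a.s.\ for i.i.d.\ $N(0,1)$ (classical Gaussian extreme-value theory, e.g.\ via Borel--Cantelli with the standard Gaussian tail), so for $\delta<1$ the synchronization event has probability $\to 0$. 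So the key step here is to extract from the algebraic description of $\SO$ a clean coordinatewise bound of the form ``$\bomega\in\SO \implies \max_i|\omega_i|\le (\text{something})\cdot N$'' with the constant sharp enough to match $\omin$ (where $\max|\omega_i|=N-1$).

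For the $\delta\ge 2$ half I would use the inner ball: $\bomega$ synchronizes at $\g=\delta\varphi(N)$ whenever $\g^{-1}\bomega\in V(A_N)$, and this holds in particular whenever $\bomega$ lies in $\g\cdot V(A_N)$. The cleanest sufficient condition is the ``box'' description of $V(A_N)$ from Proposition~\ref{lem:voronoi}: $V(A_N)$ is the projection onto the mean-zero hyperplane of $[-\tfrac12,\tfrac12]^N$, so (rescaling $\f$ by the present normalization) $\SO$ contains the projection of $[-\tfrac{\g N}{2},\tfrac{\g N}{2}]^N$. Since $\sum\omega_i=0$ we have $P_{\1^\perp}\bomega=\bomega$, hence it suffices that $|\omega_i|\le \g N/2$ for all $i$, i.e.\ $\max_i|\omega_i|\le \delta\varphi(N)N/2 = \delta N\sqrt{2\ln N}/(2(N+1))$. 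Again by Gaussian extreme-value theory $\max_i|\omega_i|=(1+o_{\P}(1))\sqrt{2\ln N}$, and $\delta N/(2(N+1))\to\delta/2\ge 1$, so with probability $\to1$ the inequality holds with room to spare, giving $\psync(\delta\varphi(N),N)\to 1$. The main obstacle is the lower half: one must be careful that the elementary necessary conditions in Lemma~\ref{lem:alg} really do yield an $O(N)$ bound on $\max_i|\omega_i|$ with the correct leading constant (so that the threshold lands exactly at $\delta=1$ rather than at some larger constant), and that the Gaussian maximum really concentrates at $\sqrt{2\ln N}$ with fluctuations $o(\sqrt{\ln N}/N)\cdot$... — in fact only $o(\sqrt{\ln N})$ is needed here, which is standard — so the probabilistic input is routine and the real work is the deterministic geometric estimate on $\partial\SO$.
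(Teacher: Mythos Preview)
Your approach is correct and matches the paper's: the necessary condition $|\omega_i|\le\gamma N$ for the $\delta<1$ half and the Voronoi-cell/cube containment (half-width $\gamma N/2$) for the $\delta\ge 2$ half are exactly the contents of Lemmas~\ref{lem:ub} and~\ref{lem:lb}; the paper simply computes the resulting probabilities via explicit $\erf$ asymptotics rather than quoting Gaussian extreme-value concentration, which is equivalent. Two small fixes: your ``$O(N/\gamma)$'' should read $\gamma N\sim\delta\sqrt{2\ln N}$ (the direction of the scaling is reversed in your write-up, though your conclusion is the right one); and the coordinate bound you flag as ``the real work'' is in fact trivial---it is just $|f_i(\btheta)|=\bigl|\sum_j\sin(\theta_j-\theta_i)\bigr|\le N$, with no appeal to Lemma~\ref{lem:alg} or to stability needed, and this already lands the threshold exactly at $\delta=1$ (the paper uses precisely this bound, see~\eqref{eq:fplt}).
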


The basic strategy is straightforward: given our previous results on
the shape of the stability domain we establish upper and lower bounds
on the probability that a Gaussian random frequency will lie in the
stable region.  We prove each of these statements separately in two
lemmas.

\begin{lem}\label{lem:ub}
  Suppose that the components of the frequency are i.i.d Gaussian as
  in~\eqref{eq:Gaussian}. The probability that the system will exhibit
  stable synchronization satisfies the upper bound
  \begin{equation}\label{eq:ub}
    \psync(\gamma,N) \leq \sqrt{N} \left(\erf(\gamma N/\sqrt{2})\r)^{N-1}.
  \end{equation}
\end{lem}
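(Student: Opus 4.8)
The plan is to turn the probabilistic claim into an elementary Gaussian integral. First I would extract a necessary condition for full synchrony: after passing to the corotating frame we may take the center of mass fixed (the discussion around~\eqref{eq:defofOmega}), so that synchrony of $\bomega$ at coupling $\gamma$ forces a stationary solution with $\gamma\f(\btheta) = -(\bomega - \bar\omega\1)$, where $\bar\omega := \frac1N\sum_i\omega_i$. Since $\av{f_i(\btheta)} = \av{\sum_{j\neq i}\sin(\theta_j-\theta_i)} \le N-1$ for every coordinate (we will not even need negative semidefiniteness of $J$ here), any synchronizing $\bomega$ must satisfy $\av{\omega_i - \bar\omega} \le \gamma(N-1)$ for all $i$. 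Hence
\[
\psync(\gamma,N) \le \P\big(\av{\omega_i - \bar\omega}\le \gamma(N-1)\ \text{for all }i\big) \le \P\big(\av{\omega_i - \bar\omega}\le c\ \text{for }i=1,\dots,N-1\big),
\]
where $c := \gamma(N-1)$ and the last inequality simply discards one of the $N$ constraints.

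The next and only substantive step is to estimate $\P(\av{\omega_i-\bar\omega}\le c,\ i\le N-1)$. The difficulty is that the centered variables $\omega_i-\bar\omega$ are dependent, being constrained to sum to zero, so one cannot just multiply marginal probabilities. I would handle this with the linear change of variables $(\omega_1,\dots,\omega_N)\mapsto(u_1,\dots,u_{N-1},m)$ given by $u_i=\omega_i-\bar\omega$ for $i\le N-1$ and $m=\bar\omega$, whose inverse is $\omega_i=u_i+m$ ($i\le N-1$), $\omega_N=m-\sum_{i\le N-1}u_i$, with Jacobian determinant $N$. A one-line computation gives $\sum_{i=1}^N\omega_i^2 = \sum_{i=1}^{N-1}u_i^2 + \big(\sum_{i=1}^{N-1}u_i\big)^2 + Nm^2$, so the joint density of $(u,m)$ factors over $m$, and using $\int_\R e^{-Nm^2/2}\,dm = \sqrt{2\pi/N}$ one obtains
\[
\P\big(\av{u_i}\le c,\ i\le N-1\big) = \sqrt{N}\,(2\pi)^{-(N-1)/2}\int_{[-c,c]^{N-1}} e^{-\frac12\left(\sum_i u_i^2 + (\sum_i u_i)^2\right)}\,du .
\]
This is precisely where the prefactor $\sqrt N$ (the Jacobian $N$ against the normalization $\sqrt{2\pi/N}$) and the exponent $N-1$ rather than $N$ both come from.

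Finally I would bound $e^{-\frac12(\sum_i u_i)^2}\le 1$, after which the integral splits into $N-1$ identical one-dimensional factors $(2\pi)^{-1/2}\int_{-c}^{c} e^{-u^2/2}\,du = \erf(c/\sqrt2)$, yielding $\psync(\gamma,N)\le \sqrt{N}\,(\erf(\gamma(N-1)/\sqrt2))^{N-1}$; since $\erf$ is increasing and $\gamma(N-1)<\gamma N$, this is at most $\sqrt{N}\,(\erf(\gamma N/\sqrt2))^{N-1}$, which is the asserted bound (in fact with a marginally sharper constant). The main obstacle in the whole argument is genuinely just the dependence among the $\omega_i-\bar\omega$; once the mean $m$ is peeled off by the change of variables, everything else is routine bookkeeping with Gaussian integrals.
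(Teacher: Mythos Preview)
Your proof is correct and is essentially the same as the paper's: the same change of variables $(\omega_1,\dots,\omega_N)\mapsto(u_1,\dots,u_{N-1},m)$ with Jacobian $N$, the same quadratic-form identity $\sum\omega_i^2=\sum u_i^2+(\sum u_i)^2+Nm^2$, integration over $m$ producing the $\sqrt{N}$, and the same crude bound $e^{-\frac12(\sum u_i)^2}\le 1$ to decouple the remaining integral. The only cosmetic difference is that you use the slightly sharper necessary condition $|\omega_i-\bar\omega|\le\gamma(N-1)$ and then relax it via monotonicity of $\erf$, whereas the paper uses $\gamma N$ directly.
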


\begin{proof}
  The vector $\bomega$ is distributed according to the multivariate
  Gaussian measure
  \begin{equation}\label{eq:multiG}
    \P(\bomega\in A) = (2 \pi)^{-\frac{N+1}{2}}\int_A \exp(-\av{\x}^2/2)\, d\x.
  \end{equation}

  Since we have moved to the co-rotating frame and we have assumed
  that $\sum \omega_i =0$, we make the following (non-orthogonal!)
  change of variables.  First choose $\chi_i$ as in~\eqref{eq:multiG}
  above, then write
  %
  \begin{equation}\label{eq:defofq1}
    \omega_i = \chi_i - v,\q v:= \frac1N \sum_{i=1}^N \chi_i, \q i=1,\dots,N.
  \end{equation}
  Note then that 
  \begin{equation}\label{eq:defofq2}
    \omega_N = -\sum_{i=1}^{N-1} \omega_i,\mbox{ or, }\chi_N = v-\sum_{i=1}^{N-1} \omega_i.
  \end{equation}
  Note that we've written the $\chi$'s in terms of $\omega_i$ with
  $i=1,\dots,N-1$ and $v$; these will be the new variables.  As we
  prove in Lemma~\ref{lem:Jacobian} below, the Jacobian of this change
  of variables is $N$.  Moreover, the quadratic form transforms as
\begin{equation}\label{eq:qf}
\begin{split}
    \sum_{i=1}^{N} \chi_i^2 
    &= \sum_{i=1}^{N-1} (\omega_i+v)^2 + \chi_N^2 \\
    &= \sum_{i=1}^{N-1}\omega_i^2 + 2v\sum_{i=1}^{N-1}\omega_i + (N-1)v^2 + \l(v-\sum_{i=1}^{N-1} \omega_i\r)^2\\
    &= \sum_{i=1}^{N-1}\omega_i^2 + Nv^2 + \l(\sum_{i=1}^{N-1} \omega_i\r)^2.
\end{split}
\end{equation}
Therefore, for any set $A$, if we denote the transformation
in~(\ref{eq:defofq1},~\ref{eq:defofq2}) as $q$, then we have
  \begin{equation}\label{eq:xform}
  \int_{q(A)} e^{-\av\bchi^2/2}\,d\bchi = \int_A e^{-\frac12 \l[ \sum_{i=1}^{N-1}\omega_i^2 + Nv^2 + \l(\sum_{i=1}^{N-1} \omega_i\r)^2\r]}\,N\,d\bomega,
\end{equation}
where the $N$ comes from the Jacobian of the transformation.  To
determine the domain of integration $A$, note that if we have a fixed
point,
\begin{equation}\label{eq:fplt}
  \av{\omega_i} =\g \av{\sum_{j=1}^N \sin(\theta_j-\theta_i)} \le \g N,
\end{equation}
so we take our domain to be
\begin{equation}\label{eq:defofA}
  A:= \av{\omega_i} < \g N\mbox{ for }i=1,\dots, N-1,\q v\in \R,
\end{equation}
and it is a necessary condition for synchronization that
$\boldsymbol\chi\in A$.  Thus we compute an upper bound on the
probability for synchronization:
  \begin{align*}
    \psync(\gamma,N) &\leq (2 \pi)^{-N/2}\int_{[-\gamma N,\gamma
      N]^{N-1}\times {\mathbb R}} e^{-\frac{1}{2}\left[
        \sum_{i=1}^{N-1} \omega_i^2 + \left(\sum_{i=1}^{N-1}
          \omega_i\right)^2 +
        Nv^2\right] }\,N\prod_{i=1}^N d\omega_i\,dv\\
    &\leq (2 \pi)^{-N/2}N\int_{[-\gamma N,\gamma N]^{N-1}\times
      {\mathbb R}} e^{-\frac{1}{2}\left[ \sum_{i=1}^{N-1} \omega_i^2 +
        Nv^2\right] }\,\prod_{i=1}^N d\omega_i\,dv\\
    &= \sqrt{N} \prod_{i=1}^{N-1} \l(\frac1{\sqrt{2\pi}}\int_{-\gamma N}^{\gamma N} e^{-\omega_i^2/2}\,d\omega_i\r)\times \l(\frac{\sqrt{N}}{\sqrt{2\pi}}\int_{-\infty}^{\infty} e^{-Nv^2}\,dv\r)\\
    &=\sqrt{N} \prod_{i=1}^{N-1} \erf(\gamma N/\sqrt{2}) \times 1 =
    \sqrt{N}\l(\erf(\gamma N/\sqrt{2}) \r)^{N-1}.
  \end{align*}

\end{proof}

\begin{lem}\label{lem:Jacobian}
  If we define the transformation as
  in~(\ref{eq:defofq1},~\ref{eq:defofq2}), then its Jacobian is
\begin{equation}\label{eq:Jacobian}
  \av{\frac{\partial (\chi_1,\ldots, \chi_{N}) }{\partial      (\omega_1,\ldots, \omega_{N-1}, v)}} = N.
\end{equation}
\end{lem}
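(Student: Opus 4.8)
The plan is to write down the Jacobian matrix of the map $(\omega_1,\dots,\omega_{N-1},v)\mapsto(\chi_1,\dots,\chi_N)$ explicitly and compute its determinant by elementary linear algebra. First I would record, from the defining relations $\chi_i=\omega_i+v$ for $i=1,\dots,N-1$ together with $\chi_N=v-\sum_{i=1}^{N-1}\omega_i$ (the latter coming from $\sum_i\chi_i=Nv$ and the constraint $\sum_i\omega_i=0$ that defines the co-rotating frame), the partial derivatives: $\partial\chi_i/\partial\omega_j=\delta_{ij}$ and $\partial\chi_i/\partial v=1$ for $i,j\le N-1$, while $\partial\chi_N/\partial\omega_j=-1$ for $j\le N-1$ and $\partial\chi_N/\partial v=1$. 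This exhibits the Jacobian as the block matrix
\[
\begin{pmatrix} I_{N-1} & \1 \\ -\1^t & 1\end{pmatrix},
\]
where $\1$ denotes the all-ones column vector of length $N-1$.

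Next I would evaluate this determinant. The cleanest route is the Schur-complement (block-determinant) formula: since the top-left block $I_{N-1}$ is invertible, the determinant equals $\det(I_{N-1})\cdot\bigl(1-(-\1^t)I_{N-1}^{-1}\1\bigr)=1\cdot(1+\1^t\1)=1+(N-1)=N$. Equivalently, one can add $\1^t$ times the first $N-1$ rows to the last row, which turns the last row into $(0,\dots,0,N)$ and leaves an upper-triangular matrix with diagonal entries $(1,\dots,1,N)$; expansion along the last column gives the same answer. Taking absolute values yields the claimed value $N$.

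Since every step is a routine determinant computation, there is essentially no serious obstacle. The only point requiring care is the bookkeeping of the constraint structure: $\chi_N$ is not an independent new variable but is determined through the mean $v$ and the vanishing-sum condition $\sum_i\omega_i=0$, and it is precisely the resulting $-\1^t$ row that produces the factor $N$ rather than $1$. I would emphasize this so that the reader sees why the non-orthogonality of the change of variables in~\eqref{eq:defofq1} contributes the $N$ appearing in~\eqref{eq:xform}.
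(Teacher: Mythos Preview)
Your proof is correct and follows essentially the same route as the paper: both write down the identical block Jacobian $\begin{pmatrix} I_{N-1} & \1 \\ -\1^t & 1\end{pmatrix}$ and evaluate its determinant, with the paper using the row-reduction argument you mention as your alternative and you leading with the equivalent Schur-complement computation.
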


\begin{proof}
  We compute
\begin{align*}
  \pd{\chi_i}{\omega_j} &= \delta_{ij},\q \pd{\chi_i}{v} = 1,\q i,j=1,\dots,N-1,\\
  \pd{\chi_N}{\omega_i} &= -1,\q \pd{\chi_N}{v} = 1.
\end{align*}
Thus this Jacobian (which we will call $G_N$) is the constant matrix
which we write  in block-diagonal form:
\begin{equation}\label{eq:defofGN}
  G_N=\l(\begin{array}{cc} I_{N-1}& 1_{(N-1)\times 1}\\-1_{1\times (N-1)} &1 \\\end{array}\r),
\end{equation}
where $I_k$ is the $k\times k$ identity matrix, $1_{a\times b}$ is the
$a\times b$ matrix of all ones, etc.  By elementary row operations
(adding the sum of the first $N-1$ rows to the last) the Jacobian can
be reduced to
\[
\l(\begin{array}{cc} I_{N-1}& 1_{(N-1)\times 1}\\0_{1\times (N-1)} &N \\\end{array}\r).
\]
This matrix is upper-triangular and we can read off the determinant as
$N$.
\end{proof}

The next proposition gives a lower bound for the probability by identifying a 
region where a sufficient condition for stability holds, over which the 
Gaussian integral can be evaluated rather explicitly. The proof of this is 
straightforward but requires some facts about polytopes in ${\mathbb R}^N$
\begin{lem}\label{lem:lb}
The probability of synchronization satisfies the following lower bound:
\begin{equation}\label{eq:lb}
  \psync(\gamma,N) \geq \l(\erf\l(\frac{\gamma N}{2\sqrt{2}}\r)\r)^{N}.
\end{equation}

\end{lem}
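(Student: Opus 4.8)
The plan is to combine the geometric results of Section~\ref{sec:stable} with the probabilistic change of variables already introduced in the proof of Lemma~\ref{lem:ub}. The key point is that the lower bound is exactly the Gaussian mass of a cube, and that cube projects into the stable region.

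First I would set up the probabilistic picture. After passing to the co-rotating frame, the effective (mean-zero) frequency vector is $\bomega = P(\bchi)$, where $\bchi$ has i.i.d. $N(0,1)$ components and $P$ is orthogonal projection onto $\1^\perp$, i.e.\ $\omega_i = \chi_i - \frac1N\sum_j \chi_j$; this is precisely the substitution used in Lemma~\ref{lem:ub}, with $v = \frac1N\sum_j\chi_j$. Next I would recall the structural input: rescaling the fixed-point equation $\gamma\f(\btheta)=-\bomega$ shows the stable frequency region at coupling $\gamma$ is $\gamma\SO$, and by Proposition~\ref{lem:voronoi} together with its Corollary, $\mathrm{cl}(\gamma\SO)$ contains the scaled Voronoi cell $\gamma V(A_N)$, which is exactly the image under $P$ of the cube $[-\gamma N/2,\gamma N/2]^N$. (Here one must check that the $\gamma$- and $N$-scalings are consistent between Proposition~\ref{lem:voronoi}, stated for general $\gamma$, and the Corollary, stated at $\gamma = 1$; this is routine linearity of $P$.)

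The core step is the containment: if $\bchi\in[-\gamma N/2,\gamma N/2]^N$, then $\bomega = P(\bchi)\in P([-\gamma N/2,\gamma N/2]^N) = \gamma V(A_N)\subseteq \mathrm{cl}(\gamma\SO)$. Since $\gamma\SO$ is a convex, relatively open subset of the hyperplane $\{\sum_i\omega_i = 0\}$, its relative boundary has $(N-1)$-dimensional Lebesgue measure zero and hence Gaussian measure zero, so $\psync(\gamma,N) = \P(\bomega\in\gamma\SO) = \P(\bomega\in\mathrm{cl}(\gamma\SO))$. (Alternatively, one may avoid the closure entirely: a linear surjection is an open map, so $P$ carries the open cube to an open subset of $\gamma V(A_N)$, which therefore lies in $\gamma\SO$ itself.) Combining these observations and using independence of the $\chi_i$,
\[
\psync(\gamma,N) \;\ge\; \P\!\left(\bchi\in[-\tfrac{\gamma N}{2},\tfrac{\gamma N}{2}]^N\right) \;=\; \prod_{i=1}^N \P\!\left(\av{\chi_i}\le \tfrac{\gamma N}{2}\right) \;=\; \left(\erf\!\left(\frac{\gamma N}{2\sqrt 2}\right)\right)^{N},
\]
where the last equality is the identity $\frac1{\sqrt{2\pi}}\int_{-a}^{a} e^{-x^2/2}\,dx = \erf(a/\sqrt2)$ applied with $a = \gamma N/2$.

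I do not expect a genuine analytic obstacle here: once Proposition~\ref{lem:voronoi} and its Corollary are in hand, the argument is essentially bookkeeping. The two places that warrant care are (i) correctly identifying the law of the centered frequency vector with the projection of an i.i.d.\ Gaussian vector, so that ``$\bchi$ in a cube'' translates into ``$\bomega$ in $\gamma V(A_N)$'', and (ii) the passage from $\SO$ to $\mathrm{cl}(\SO)$, which is immediate from convexity of $\SO$ (the boundary of a convex body is Lebesgue-null) or from openness of linear projections. Everything else reduces to the elementary Gaussian computation above.
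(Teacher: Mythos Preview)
Your proposal is correct and follows essentially the same approach as the paper: both arguments reduce the lower bound to the Gaussian mass of the cube $[-\gamma N/2,\gamma N/2]^N$ via the fact that $V(A_N)$ is the projection of that cube and is contained in the stable region. The paper phrases the containment as ``the cube lies in the cylinder $V(A_N)\times\R$'', while you phrase it as ``$P(\mbox{cube}) = V(A_N)$'', but these are equivalent; you are also a bit more careful than the paper about the closure/boundary measure-zero issue.
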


\begin{proof}
  The proof here uses the fact that the polytope $V(A_N)$, which we
  know to be contained in the (closure of) the stable region, is the
  projection onto the mean zero hyper-plane of the cube in one higher
  dimension. This, together with the orthogonal invariance of the
  Gaussian, give the result.

  We use the notation of the previous lemma, that ${\boldsymbol
    \chi}\in {\mathbb R}^N$ a frequency vector which is {\em not}
  assumed to have zero mean, and ${\boldsymbol \omega} = {\boldsymbol
    \chi} - \frac{\langle{\bf 1}, \boldsymbol\chi \rangle}{N}$ is the
  orthogonal projection onto the mean zero subspace. By the orthogonal
  invariance of the Gaussian we have that
 \[\int_{V(A_N) \times {\mathbb R}} (2
\pi)^{-\frac{N}{2}} e^{-\frac{\Vert\boldsymbol\chi \Vert^2 }{2} }\, d\boldsymbol\chi = \int_{V(A_N)} (2
\pi)^{-\frac{N-1}{2}} e^{-\frac{\Vert\boldsymbol\omega \Vert^2 }{2} }\, d\boldsymbol\omega.
\]
Since the stability region contains the polytope $V(A_n)$, we have the
inequality
\[
 \psync(\gamma,N) \geq \int_{V(A_N) \times {\mathbb R}} (2
\pi)^{-\frac{N}{2}} e^{-\frac{\Vert\boldsymbol\chi \Vert^2 }{2} }\, d\boldsymbol\chi = \int_{V(A_N)} (2
\pi)^{-\frac{N-1}{2}} e^{-\frac{\Vert\boldsymbol\omega \Vert^2 }{2} }\, d\boldsymbol\omega.
\]
Next note that since the polytope $V(A_N)$ is the projection of the
cube $\l[-\frac{N\gamma}{2},\frac{N\gamma}{2}\r]^N$ onto the $(N-1)$-
dimensional plane normal to ${\bf 1}$, we necessarily have that the cube
is contained in $V(A_N)\times{\mathbb R}$, the cylinder in ${\mathbb
  R}^N$ with cross-section $V(A_N)$:
\[
\l[-\frac{\gamma}{2},\frac{\gamma}{2}\r]^N \subset V(A_N)\times{\mathbb R}.
\]
This in turn shows that 
\[
 \psync(\gamma,N) \geq \int_{V(A_N) \times {\mathbb R}} (2
\pi)^{-\frac{N}{2}} e^{-\frac{\Vert\boldsymbol\chi \Vert^2 }{2} }\, d\boldsymbol\chi \geq \int_{[-\frac{N\gamma}{2},\frac{N\gamma}{2}]^N} (2
\pi)^{-\frac{N}{2}} e^{-\frac{\Vert\boldsymbol\chi \Vert^2 }{2} }\, d\boldsymbol\chi = (\erf(\frac{N \gamma}{2\sqrt{2}})^N .
\]

\end{proof}

{\bf Proof of Theorem~\ref{thm:limit}.}  Assume that $\g =
\delta\varphi(N)$, with $\delta<1$.  Using Lemma~\ref{lem:ub}, we have
that
\begin{equation}\tag{\ref{eq:ub}}
  \psync(\gamma,N) \leq  \sqrt{N}\left(\erf(\gamma N/\sqrt{2})\r)^{N-1}.
\end{equation}
The standard asymptotic expansion for the error function
gives~\cite{}, for large $x$:
\begin{equation}\label{eq:erf}
  \erf(x) = 1 - \frac{e^{-x^2}}{x\sqrt{\pi}}(1+O(x^{-2})),
\end{equation}
so we have
\begin{equation}\label{eq:erfN}
  \erf(\gamma N/ \sqrt{2}) = \erf(\delta\sqrt{\ln(N)}) =  1 - \frac{\exp(-\d^2\ln(N))}{\d\sqrt{\pi\ln(N)}}(1+O(\ln(N)^{-1})).
\end{equation}
This means that there exist $C_1\in(0,1), \d<\tilde\d<1$ such that,
for $N$ sufficiently large,
\begin{equation*}
  \erf(\gamma N/\sqrt{2}) = 1 - C_2\exp(-{\tilde\d}^2\ln(N)) \le 1-C_2 N^{-\tilde{\d}^2}
\end{equation*}
A straightforward application of L'H\^opital's Rule shows that, if
$\tilde\d<1$, then
\begin{equation*}
  \l(1-C_2 N^{-\tilde{\d}^2}\r)^N \asymp \exp(-C_3 N^{1-\tilde\d^2}),
\end{equation*}
and therefore, if $\d<1$, in the limit as $N\to\infty$, the right-hand
side of~\eqref{eq:ub} goes to zero as $N\to\infty$. (The $\sqrt N$ in
front and one power of $\erf$ in the back will not change anything for
large $N$.)

Now assume $\g = \delta\varphi(N)$ with $\d>2$, and the argument is
similar.  Using Lemma~\ref{lem:lb}, we have
\begin{equation}\tag{\ref{eq:lb}}
  \psync(\gamma, N) \ge \erf\l(\frac{\gamma N}{2\sqrt2}\r)^N.
\end{equation}
Again using~\eqref{eq:erf}, we plug $\gamma = \delta\varphi(N)$
into a single term and obtain
\begin{equation*}
  \erf(\gamma N/(2\sqrt2)) = \erf(\delta\sqrt{\ln(N)}/2) = 1-\frac{\exp(-\frac{\d^2}4\ln(N))}{\frac\d2\sqrt{\pi\log(N)}}\l(1+O(\ln(N)^{-1})\r).
\end{equation*}
This means that there exists $C>0, 2<\d<\tilde{\d}$ such that for
$N$ sufficiently large,
\begin{equation*}
  \erf(\gamma N/(2\sqrt2)) \ge 1-C N^{-\tilde{\d}^2/4}.
\end{equation*}
Again using l'H\^{o}pital's Rule, we have that if $\tilde{\d}>2$, then
this goes to 1 as $N\to\infty$.

\qed

Finally, we prove that, in the standard scaling, the probability of
choosing a fully synchronous solution is zero and, moreover, that it
goes to zero exponentially fast.  Specifically, we prove:
\begin{prop}\label{prop:exp}
  For all $\d >0$, there exist $C_1, C_2 \in \R$ with
\begin{equation}\label{eq:exp}
  \psync(\delta/ N, N) \le C_1 e^{-C_2(\delta) N}.
\end{equation}
\end{prop}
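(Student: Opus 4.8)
The plan is to reuse the upper bound from Lemma~\ref{lem:ub} and show that, in the classical scaling $\gamma = \delta/N$, the relevant error function is bounded away from $1$, which forces exponential decay.  Concretely, I would start from
\begin{equation*}
  \psync(\delta/N, N) \le \sqrt{N}\,\l(\erf\l(\frac{\delta}{\sqrt2}\r)\r)^{N-1},
\end{equation*}
obtained by substituting $\gamma = \delta/N$ into \eqref{eq:ub} --- note that the argument $\gamma N/\sqrt2 = \delta/\sqrt2$ is now a constant independent of $N$.  Since $\erf$ is strictly increasing and $\erf(x) < 1$ for every finite $x$, the quantity $r(\delta) := \erf(\delta/\sqrt2)$ lies strictly in $(0,1)$ for each fixed $\delta > 0$.

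Next I would absorb the $\sqrt{N}$ prefactor.  Write $\psync(\delta/N,N) \le \sqrt N\, r^{N-1} = r^{-1}\exp\l(\tfrac12\ln N + N\ln r\r)$.  Because $\ln r < 0$, the linear term $N\ln r$ dominates $\tfrac12 \ln N$ for large $N$; picking any $C_2(\delta) \in (0, -\ln r)$, there is an $N_0$ so that $\tfrac12\ln N + N \ln r \le -C_2(\delta)N$ for all $N \ge N_0$.  For the finitely many $N < N_0$ one simply enlarges the constant $C_1$ so the bound holds for all $N$; alternatively one can just take $C_1$ large enough from the outset, e.g. $C_1 = \sup_{N\ge 1} \sqrt N\, r^{N-1} e^{C_2 N}$, which is finite once $C_2 < -\ln r$.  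This yields \eqref{eq:exp} with $C_2 = C_2(\delta)$ depending on $\delta$ (as stated) and $C_1$ absolute (or also $\delta$-dependent, which is harmless).

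There is essentially no hard part here: the whole proposition is a corollary of Lemma~\ref{lem:ub}, the content being simply that evaluating the error function at a fixed argument gives a base strictly less than one, so an $N$-th power decays exponentially.  The one mild subtlety worth a sentence in the write-up is making sure the polynomial factor $\sqrt N$ and the lost single power of $\erf$ do not interfere --- but since $\exp(-cN)$ beats any polynomial, this is immediate.  I would also remark that $C_2(\delta) \to 0$ as $\delta \to \infty$ (since $r(\delta)\to 1$), which is consistent with the phase-transition picture: in the classical scaling synchrony never has positive probability in the limit, but the decay rate degrades as the coupling constant grows.
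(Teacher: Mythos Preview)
Your proposal is correct and follows essentially the same approach as the paper: both start from Lemma~\ref{lem:ub}, observe that $\erf(\delta/\sqrt{2})$ is a constant strictly less than $1$, and then absorb the polynomial prefactor $\sqrt{N}$ into the exponential by choosing $C_2 \in (0,-\ln r)$. The only cosmetic difference is that the paper gives an explicit calculus bound for $C_1$ valid for all $N$ at once, whereas you handle large $N$ first and then adjust $C_1$ for the finitely many small $N$; these are equivalent.
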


\begin{proof}
  Using Lemma~\ref{lem:ub}, we have that 
\begin{equation*}
  \psync(\d/N,N) \le \sqrt{N}\erf(\d/\sqrt{2})^{N-1}
\end{equation*}
Choose $\erf(\d/\sqrt2) < \alpha < 1$, and write $\beta =
\alpha/\erf(\d/\sqrt2)$.  Then a calculus argument shows that if we
define $C_1 > (2 e \log(1/\beta))^{-1/2}$, then $C_1 > \sqrt{N}
\beta^{N-1}$ for all $N$, or
\begin{equation*}
  \sqrt{N}\erf(\d/\sqrt{2})^N < C_1 \alpha^{N-1}.
\end{equation*}
Choose $C_2 = -\log\alpha$ and pull a power of $\alpha$ into $C_1$,
and we are done.

\end{proof}

\begin{figure}[ht]
\begin{centering}
\includegraphics[width=6in]{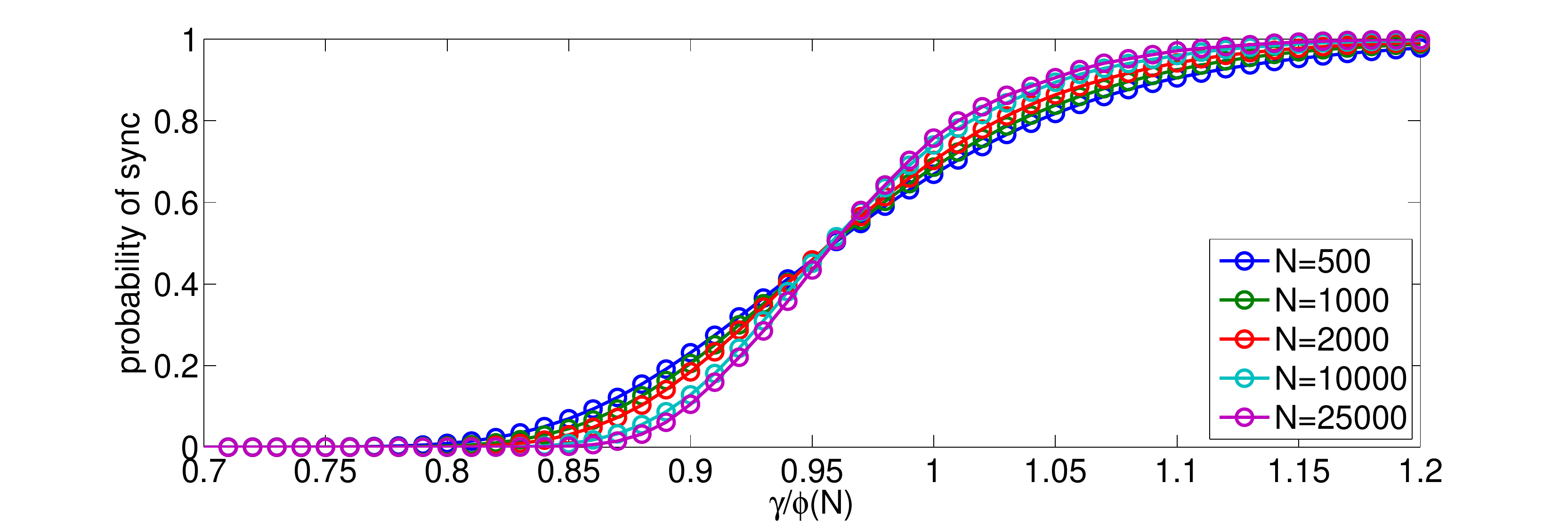}
  \caption{A numerical simulation of the probability of full synchrony
    in the Kuramoto model in the newly identified scaling.  }
\label{fig:probdense}
\end{centering}
\end{figure}

Finally we close this section with some numerical simulations.  The
figure \ref{fig:probdense} depicts a Monte-Carlo simulation of the
Kuramoto problem with $N=1000$, $N=2000$, $N=10,000$ and $N=25,000$
oscillators.  The figures were generated as follows: for each
realization a direction was generated uniformly on the
$N-1$-dimensional sphere, and the distance to the boundary of the
stability region was computed. In this situation the formulation of
Mirollo and Strogatz was found to be more computationally efficient,
and this was solved numerically using a bisection method. For each
such direction, given the distance to the stability boundary the
conditional probability that a Gaussian random vector in that
direction would lie within the stability region could be calculated,
and averaging over all realizations gives a numerical approximation to
the probability of synchronization. For each of the graphs $5000$
realizations were used.

The numerics agree well with the analytical results. We have shown
analytically that in the limit of large $N$ the probability of full
synchrony is $0$ for $\gamma<1$. The numerics suggest that
\[
 \lim_{N \rightarrow \infty} P(\gamma \phi(N),N) = \left\{\begin{array}{c} 0 \qquad \gamma < \gamma^* \\ 1 \qquad \gamma>\gamma^* \end{array}\right.
\]
with a critical coupling constant of roughly $\gamma^* \approx 1$.
This statement is consistent with, but sharper than, the conclusion of
Theorem~\ref{thm:limit}.

\section{Examples}\label{sec:examples}

\subsection{Comprehensive example for three oscillators}

For $N=3$ there are $2^3-2=6$ vertices. The corresponding frequency
vectors are given by
\begin{align*}
& \vec \omega_1 = (1,1,-2)^t \\
& \vec \omega_2 = (1,-2,1)^t \\
& \vec \omega_3 = (-2,1,1)^t \\
& \vec \omega_4 = (-1,-1,2)^t \\
& \vec \omega_5 = (-1,2,-1)^t \\
& \vec \omega_6 = (2,-1,-1)^t. \\
\end{align*}
The plane orthogonal to the vector $(1,1,1)$ is spanned by the vectors

\begin{equation*}
  {\bf e}_1=(1,0,-1)/\sqrt2,\quad {\bf e}_2 = (1,-2,1)/\sqrt{6}. 
\end{equation*}
In this basis the frequency vectors have the representations $\pm
\sqrt{6} {\bf e}_2, \sqrt6(\pm \frac{\sqrt3}{2} {\bf e}_1 \pm
\frac{1}{2} {\bf e}_2 )$, which are obviously the vertices of a
regular hexagon of side length $\sqrt6.$ As shown in the lemma, these
represent local minima of distance on the surface of marginal
stability.

The phase diagram for three oscillators is summarized in
Figure~\ref{fig:fly}.  Since the map from the configuration space
${\mathbb T}^{n-1}$ to the frequency space ${\mathbb R}^{n-1}$ has
degree zero it follows that the number of preimages of a given point
$\omega$ (the number of configurations with a given frequency) is
even, half of which have positive (reduced) Jacobian determinant and
half of which have negative Jacobian determinant.  The latter are, of
course, always unstable. The former have even index but may not be
stable. Outside the shaded region there are no fully synchronized
solutions. As one crosses the stability boundary a pair of
synchronized solutions are created: one is stable (index zero) and one
is unstable (index one). The guaranteed stable region, the image of
the cube $[0,\frac{\pi}{2}]^3$ under the frequency map, is also
plotted: it is a dark curve just inside the stable region.

As shown earlier we can derive expressions for the frequency vectors
which are hardest and easiest to synchronize. The hardest vectors to
synchronize have $(n-1)$ oscillators traveling together with no phase
shift and $1$ oscillator leading (or trailing) by phase
$\frac{\pi}{2}.$ These solutions have frequency vectors
\[
\boldsymbol \omega = (\pm1,\pm1,\mp2)
\] 
and permutations. These points are marked by the six dots on the
boundary.  These points lie on a circle of radius $\sqrt6$. The
solutions which are easiest to synchronize have one oscillator at
phase $0$, one leading by $x$ and one trailing by $x$ with
corresponding frequency vector
\[
\boldsymbol \omega = \left(\pm(\sin(x) + \sin(2x)), 0, \mp(\sin(x) +
  \sin(2x))\right)
\]
plus permutations. The maximum length frequency vector of this form is attained at 
\[
x = \arccos(\frac{1}{8}(\sqrt{33}-1))
\]  
which gives a frequency vector of length 
\[
|\omax(3)| = \frac{\sqrt{2}}{16}(3 +
\sqrt{33})\sqrt{\frac{1}{2}\left(15 + \sqrt{33}\right)} \approx 2.49
\]
It is interesting that, in the case of three oscillators the stability
region is very close to a circle and the phase transition to the
fully-synchronized state is quite sharp: there are {\em no} fully
synchronized solutions for $\omega > \frac{\sqrt{2}}{16}(3 +
\sqrt{33})\sqrt{\frac{1}{2}\left(15 + \sqrt{33}\right)} \approx 2.49$
and {\em all} solutions synchronize for $\omega < \sqrt{6} \approx
2.45$, a relative frequency shift of about $1.6\%$.

On the interior of the stability region there are two more secondary 
bifurcation curves, which look like a pair of curvilinear triangles 
rotated through angle $\frac{2 \pi}{6}$ relative to one another. 
As one crosses these curves a pair of unstable 
solutions are created, one of index $1$ and one of index $2$. 
Thus the frequency plane can be subdivided as follows:
\begin{itemize}
\item Region 1: Two solutions, one of index $0$ and one of index $1$.
\item Region 2: Four solutions, one of index $0$, two of index $1$, 
one of index $2$.
\item Region 3: Six solutions, one of index $0$, three of index $1$, 
two of index $2$.  
\end{itemize}

\begin{figure}[ht]
\begin{centering}
\includegraphics[width=4in]{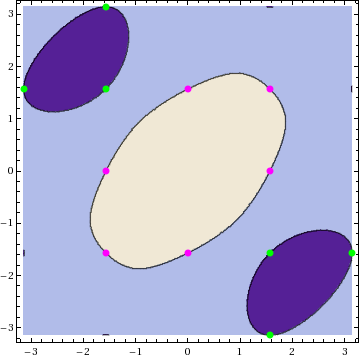}
\caption{A plot of the stability regions for the three particle
  system.  In the light beige central region the Jacobian is of index
  0. In the surrounding pale blue region the Jacobian is of index
  1. In the two dark blue islands the Jacobian is of index 2. The
  marked points depict special points. The six central magenta ones
  depict the last frequencies to be stabilized, while the six green
  ones represent points on the boundary between the regions of index 2
  and index three.}
\label{fig:dope}
\end{centering}
\end{figure}

Also shown is a related figure in the the configuration space. Here
the configuration space is colored according to the stability of the
given configuration (note that by the $O(1)$ symmetry the first
oscillator can be chosen at $\theta_1=0$). The light-colored central
region represents the stable solutions. It is, as was shown,
convex. The surrounding gray region represents the index $1$
solutions, i.e.~those with a single unstable direction. Both of these
regions cover the range of the frequency map. Finally the two darkest
regions represent the two triangular regions where there exist
solutions of index two.

\begin{figure}[ht]
\begin{centering}
\includegraphics[width=4in]{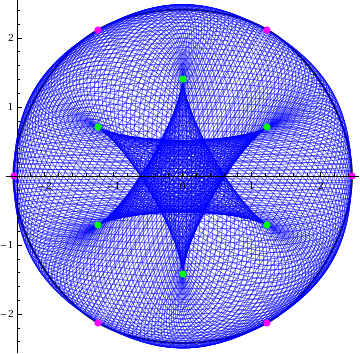}
\caption{The frequency map. The magenta points correspond to the
  analogous points in the previous plot: the boundary of the region
  between the region of index zero and the region of index one
  corresponds to the boundary of the range. These points denote the
  last frequency vectors to exhibit synchrony. The green points denote
  similarly distinguished points on the boundary between the regions
  of index 1 and index 2. Also shown is the guaranteed stable region,
  which is tangent to the boundary at the magenta.}
  \label{fig:fly}
\end{centering}
\end{figure}

\subsection{Example for four oscillators}
The case of four oscillators is the first with different types of vertices. 
The vertices are of two types: the first are vertices of the form 
\[
\vec \omega = (\pm 1,\pm1,\pm1,\mp3)
\]
and permutations, which have length $\sqrt{12}$. These are local minima of the 
length of the frequency vector constrained to the surface of marginal 
stability, and represent the hardest frequencies to stabilize. There are 
eight such vertices. The second type of vertex has frequency vectors of the 
form 
\[
\vec \omega  = (\pm 2,\pm 2, \mp2,\mp2)
\]
and permutations, which have length $4$. There are six such vertices
and these represent the easiest vectors to stabilize.  These points
together form the vertices of a polytope known as the rhombic
dodecahedron. It is the Voronoi cell for the $A_3$ lattice - the face
centered cubic lattice.

The figure depicts the region of guaranteed stability - the image of the 
cube $[0,\frac{\pi}{2}]$ under the frequency projection map. Since this 
is a symmetric nonlinear projection of the cube in ${\mathbb R}^4$ into 
${\mathbb R}^3$ it is perhaps not surprising that it takes the form of 
a curvilinear rhombic dodecahedron, since the corresponding linear projection 
gives the (flat) rhombic dodecahedron. The vertices typified by the 
frequency $\omega = (1,1,1,-3)$ are those where three of the curvilinear 
rhombic faces come together, while those vertices typified by frequencies 
like $(2,2,-2,-2)$ are those where four rhombic faces meet.  The actual 
stability region (not depicted) is somewhat larger, and resembles an octahedron which 
is tangent to the region of guaranteed stability at the fourteen vertices listed above.

\begin{figure}[ht]
\begin{centering}
\includegraphics[width=4in]{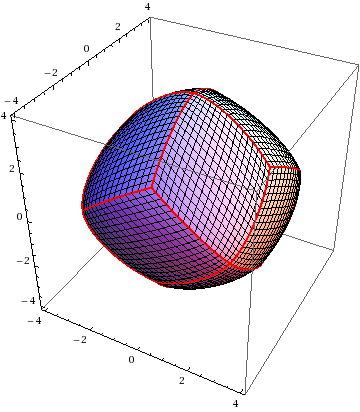}
\caption{ A region of guaranteed stability for the 
Kuramoto problem with four oscillators. The region is a rhombic dodecahedron 
with curvilinear faces. The vertices are all 
points of marginal stability, where there is a zero eigenvalue of multiplicity 
two or more.  
}
  \label{fig:phat}
\end{centering}
\end{figure} 

\section{Scaling and Extreme Value Statistics}\label{sec:scaling}

In the preceeding section we showed that the critical scaling for 
full synchronization differs from the classical scaling by a logarithmic 
factor: that one should consider 
\[
\frac{d{\bf \theta}}{dt} = {\bf \omega} + \frac{\gamma\sqrt{\log(N^2)}}{N}
\]
rather than the more usual $\frac{1}{N}$ scaling. In this section 
we give a short heuristic argument as to why this is the correct 
scaling, which is connected with the extreme value statistics of the 
frequencies\cite{Resnick, BC}.

Let us recall the basics of extreme value statistics. Give a collection of 
idenpendent and identically distributed random variables $\omega_i$ 
the extreme values statistics concerns the distribution of the 
quantity $M_N=\max(\omega_1,\omega_2,\omega_3,\ldots \omega_N).$ 
The Fisher-Tippett-Gnedenko theorem\cite{Resnick} characterizes the rescaled distribution
of such a quantity. It says that if the distribution of a suitably 
rescaled $M_N$  converges to a non-degenerate distribution $G(z)$:
\[
{\mathbb P}(M_N-b_N)/a_N\leq z) {\rightarrow} G(z) \mbox{ as }{N \rightarrow \infty}
\]
then $G(z)$ is one of the following three distributions, the Gumbell,
Frechet and Weibull distributions.
\begin{align}
  G_1(z) &= \exp(\exp(-z)), \\
  G_2(z) &= \begin{cases} 0, & z\leq 0, \\ \exp(-z^{-\alpha}), & z>0,\\ \end{cases}\\
  G_3(z) &= \begin{cases} \exp(-z^{\alpha}), & z<0, \\ 1, & z>0. \end{cases}
\end{align}
We claim that the preceeding calculation shows that the probability of
full synchrony for the Kuramoto models is bounded above and below by
extreme value statistics for the Gaussian, and that the slightly
anomolous scaling seen in this problem is a reflection of the extreme
value statistics.

It is easy to see that for the Kuramoto problem one has the following
obvious estimate for synchronous solutions. From the formula for the
frequencies in the classical scaling one has
\[
\omega_i - \omega_j = \frac{1}{N}\sum_k \sin(\theta_k-\theta_i) - \sin(\theta_k-\theta_j) 
\]
leading to the easy estimate 
\[
\max_i(\omega_i) - \min_i(\omega_i) \leq 2 
\]
that must hold in order for there to be a fully synchronous solution.
This gives an upper bound for the probability in terms of the
distribution of
\[
\max_i(\omega_i) - \min_i(\omega_i),  
\]
a kind of mean adjusted extreme value statistic. This holds for any choice 
of distribution of the frequencies. For the Gaussian case it is clear that 
the correct scaling is for this statistic is 
\begin{equation}\label{eq:mm}
  \max_i(\omega_i) - \min_i(\omega_i) \propto \sqrt{\log{N}}.
\end{equation}
This gives a motivation for the modified scaling necessary for full
synchrony.  For Gaussian distibutions of frequencies the lower bound
is also of the same form. The lower bound we proved in the preceeding
section estimates the probability of synchrony in terms of the
probability that the frequency lies in a Voronoi cell of the $A_N$
lattice.  

So then might pose the question of where the standard scaling might be
applicable?  We note that the anomalous scaling occurs because of the
fact that while the typical sample of a Gaussian is $O(1)$, the
maximum of a sample of size $N$ (the {\em first order statistic}) is
significantly larger than this, being $O(\sqrt{\log N})$.  So one
could modify the original question: instead of requiring full
synchrony for all oscillators, which leads to a condition
like~\eqref{eq:mm}, we could ask the question of whether or not we
have ``all but one'' oscillator synchronize, or even ``all but $k$''
oscillators synchronize.  This would then be governed by the typical
size of the second order statistic or the $k+1$st order statistic.
However, for fixed $k$ and $N\to\infty$, these all have the same
scaling, so we conjecture that for these problems we would also obtain
the scaling seen here.

However, if we modified even further and asked for solutions such that
some fixed fraction of oscillators synchronized (e.g. for large $N$,
we require that $0.9N$ oscillators synchronized), then this would come
from the typical size of the 90th percentile of the sample, and this
is $O(1)$.  Thus we further conjecture that this type of question
would lead to the classical scaling.  We will consider these questions
in future work.

\section*{Acknowledgments}

The authors would like to thank Yulij Baryshnikov for useful discussions.  RELD
was partially supported by NSF grant CMG-0934491. JCB and MJP were partially 
supported by NSF grant DMS-0807584.

\bibliographystyle{plain} \bibliography{Kuramoto}

\end{document}